\documentclass[12pt, reqno, a4paper]{amsart}


\usepackage{amsmath, amssymb, amsfonts, enumerate}
\usepackage{amsthm}
\usepackage{latexsym, xcolor, epsfig}
\usepackage{hyperref, mathrsfs}
\usepackage{esint}

\usepackage[numbers, square]{natbib}



\usepackage{scalerel}

\renewcommand{\Bbb}{\mathbb}


\newenvironment{nalign}{
    \begin{equation}
    \begin{aligned}
}{
    \end{aligned}
    \end{equation}
    \ignorespacesafterend
}

     \addtolength{\textwidth}{3 truecm}
     \addtolength{\textheight}{1 truecm}
     \setlength{\voffset}{-.6 truecm}
     \setlength{\hoffset}{-1.3 truecm}


\newcommand{\Z}{\mathbb{Z}}
\newcommand{\R}{\mathbb{R}}

\newcommand{\Q}{\mathbb{Q}}
\usepackage{dsfont}\newcommand{\1}{\mathds{1}}
\newcommand{\nd}{\text{ and }}


\DeclareMathOperator{\sgn}{sgn}


\newcommand{\eps}{\varepsilon} 
\renewcommand{\epsilon}{\varepsilon} 
\renewcommand{\leq}{\leqslant} 
\renewcommand{\geq}{\geqslant}
\renewcommand{\subset}{\subseteq} 

\newtheorem{theorem}{Theorem}[section]
\newtheorem*{theorem*}{Theorem}

\newtheorem{proposition}[theorem]{Proposition}
\newtheorem*{proposition*}{Proposition}
\newtheorem{lemma}[theorem]{Lemma}
\newtheorem*{lemma*}{Lemma}

\newtheorem*{corollary*}{Corollary}

\newtheorem*{example*}{Example}
\newtheorem{claim}{Claim}
\newtheorem*{claim*}{Claim}

\newtheorem*{observation*}{Observation}

\theoremstyle{definition}
\newtheorem{definition}[theorem]{Definition}
\newtheorem*{definition*}{Definition}

\theoremstyle{remark}

\newtheorem*{remark*}{Remark}

\newtheorem*{question*}{Question}

\numberwithin{equation}{section}

\title[Sets with many solutions]{\vspace{-0.7cm}Maximising the number of solutions to a linear equation in a set of integers}
\author{James Aaronson}
\email{james.aaronson@maths.ox.ac.uk}

\date{}
\begin{document}

\begin{abstract}
Given a linear equation of the form $a_1x_1 + a_2x_2 + a_3x_3 = 0$ with integer coefficients $a_i$, we are interested in maximising the number of solutions to this equation in a set $S \subseteq \mathbb{Z}$, for sets $S$ of a given size.

We prove that, for any choice of constants $a_1, a_2$ and $a_3$, the maximum number of solutions is at least $\left(\frac{1}{12} + o(1)\right)|S|^2$. Furthermore, we show that this is optimal, in the following sense. For any $\varepsilon > 0,$ there are choices of $a_1, a_2$ and $a_3,$ for which any large set $S$ of integers has at most $\left(\frac{1}{12} + \varepsilon\right)|S|^2$ solutions.

For equations in $k \geq 3$ variables, we also show an analogous result. Set $\sigma_k = \int_{-\infty}^{\infty} (\frac{\sin \pi x}{\pi x})^k dx.$ Then, for any choice of constants $a_1, \dots, a_k$, there are sets $S$ with at least $\left(\frac{\sigma_k}{k^{k-1}} + o(1)\right)|S|^{k-1}$ solutions to $a_1x_1 + \dots + a_kx_k = 0$. Moreover, there are choices of coefficients $a_1, \dots, a_k$ for which any large set $S$ must have no more than $\left(\frac{\sigma_k}{k^{k-1}} + \varepsilon\right)|S|^{k-1}$ solutions, for any $\varepsilon > 0$.
\end{abstract}

\maketitle

\section{Introduction}\label{sec:intro}

Let $a_1, a_2 \nd a_3$ be fixed coprime integers, none of which is zero. We will consider the linear equation
\begin{equation}\label{eqn:to_solve}
a_1x_1 + a_2x_2 + a_3x_3 = 0. 
\end{equation}
In this paper, we are interested in the problem of finding sets with as many solutions to (\ref{eqn:to_solve}) as possible. This leads to the following definition.
\begin{definition}
Given a finite set $S \subset \Z$, define $T(S) = T_{a_1, a_2, a_3}(S)$ to be the number of triples $x_1, x_2, x_3 \in S$ satisfying (\ref{eqn:to_solve}).
\end{definition}

The trivial upper bound on $T(S)$ is $T(S) \leq |S|^2$. This is because, for any choice of $x_1 \nd x_2$, there is at most one choice of $x_3$ such that $a_1x_1 + a_2x_2 + a_3x_3 = 0$, namely $x_3 = \frac{-a_1x_1-a_2x_2}{a_3}$. We are interested in making $T(S)$ as large as possible, for a fixed size $|S|$. 

For some choices of coefficients $a_1, a_2 \nd a_3$, the exact maximal value of $T(S)$ is known. For example, consider the case $a_1 = a_2 = a_3 = 1$. Then, work of Hardy and Littlewood \cite{HLIneqNotes} and Gabriel \cite{Gabriel} shows that, when $|S|$ is odd, $T(S)$ is maximised when $S$ is an interval centred about 0. This was extended to even $|S|$ by Lev in \cite{lev_max}. In fact, their arguments show that if $S \subseteq \Z$ is a set, and $S'$ is an interval centred about 0 of the same size, then $T_{a_1, a_2, a_3}(S) \leq T_{1,1,1}(S')$. The ideas behind their approaches involve rearrangement inequalites, which are discussed in detail in \cite[Chapter 10]{HLP}, and which inspire some of the arguments in this paper.

Similarly, it is shown by Green and Sisask in \cite[Theorem 1.2]{GreenSisask} and by Lev and Pinchasi in \cite[Theorem 2]{LevPinchasi} respectively that, if $(a_1, a_2, a_3) = (1, -2, \pm 1)$, then $T(S)$ is again maximised when $S$ is an interval centred at 0.

The set of solutions to $x_1 - 2x_2 + x_3 = 0$ is precisely the set of three-term arithmetic progressions; that is, the set of affine shifts of the set $\{0, 1, 2\}$. By analogy with this, Bhattacharya, Ganguly, Shao and Zhao considered longer arithmetic progressions; in \cite[Theorem 2.4]{uppertails}, they proved that the number of $k$ term arithmetic progressions in a set $S$ of $n$ integers is maximised when $S$ is an interval. 

Ganguly asked \cite{pc_ganguly} about other affine patterns; in particular, finding sets $S$ with as many affine copies of $\{0, 1, 3\}$, or solutions to $x + 2y = 3z$, as possible. In this case, such a result would necessarily be less clean; for instance, there are more solutions to $x + 2y = 3z$ in $\{0,1,3\}$ than in $\{0,1,2\}$. 

Indeed, in general, much less is known. For a lower bound on the maximal value of $T(S)$, a fairly good bound is given by the following example.
\begin{proposition}\label{prop:twelfth}
Regardless of the values of $a_1, a_2 \nd a_3$, there are choices of $S$ with $|S|$ arbitrarily large, for which $T(S) \geq \frac{1}{12}|S|^2 + O(|S|)$.
\end{proposition}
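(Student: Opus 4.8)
The plan is to reduce the coefficient-dependent equation to the balanced equation $t_1+t_2+t_3=0$ by a change of variables, and then to exhibit an explicit $S$. Set $L=\mathrm{lcm}(|a_1|,|a_2|,|a_3|)$ and $b_j=L/a_j$ for each $j$; since $a_j\mid L$, each $b_j$ is a nonzero integer and $a_jb_j=L$. The point of this choice is that if $x_j=b_jt_j$, then $a_1x_1+a_2x_2+a_3x_3=L(t_1+t_2+t_3)$, which vanishes exactly when $t_1+t_2+t_3=0$. This converts the problem into a single balanced equation, at the cost of rescaling each variable by its own factor $b_j$.

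Writing $I=\{-N,\dots,N\}$, I would take $S=b_1I\cup b_2I\cup b_3I$ (each $b_jI$ is symmetric, so only $|b_j|$ matters). Every triple $(t_1,t_2,t_3)\in I^3$ with $t_1+t_2+t_3=0$ yields a solution $(b_1t_1,b_2t_2,b_3t_3)\in S^3$ of (\ref{eqn:to_solve}), and distinct triples give distinct solutions because $t\mapsto b_jt$ is injective; hence $T(S)$ is at least the number of such triples. That count is a standard computation: for each pair $(t_1,t_2)\in I^2$ with $|t_1+t_2|\le N$ there is exactly one admissible $t_3$, and the number of such pairs is the number of integer points $(t_1,t_2)\in I^2$ with $|t_1+t_2|\le N$, namely $\tfrac34(2N)^2+O(N)=3N^2+O(N)$.

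It remains to bound $|S|$. Each $b_jI$ has exactly $2N+1$ elements, so $|S|\le 3(2N+1)=6N+O(1)$, and therefore $\tfrac1{12}|S|^2\le 3N^2+O(N)\le T(S)+O(N)$, which rearranges to $T(S)\ge \tfrac1{12}|S|^2+O(|S|)$. Letting $N\to\infty$ (and noting $|S|\ge 2N+1\to\infty$) produces sets of arbitrarily large size, giving the claim.

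The one thing to get right is the accounting for $|S|$: the pieces $b_iI$ and $b_jI$ overlap in their common multiples, and if two of the $b_j$ coincide an entire piece is repeated. Crucially, such overlaps only decrease $|S|$ below $6N$, which can only strengthen the inequality $T(S)\ge\tfrac1{12}|S|^2$; so the only estimate genuinely needed is the clean upper bound $|S|\le 3(2N+1)$, with no matching lower bound required. The value $\tfrac1{12}=\tfrac34\cdot\tfrac1{3^2}$ then transparently reflects the balanced-equation density $\sigma_3=\tfrac34$ divided by the square of the number of pieces, matching the general shape $\sigma_k/k^{k-1}$.
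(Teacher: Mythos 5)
Your proposal is correct and is essentially the paper's own construction: you dilate the three pieces by factors $b_j$ with $a_jb_j$ constant so that the equation becomes the balanced $t_1+t_2+t_3=0$ on a common interval, and then count pairs with $|t_1+t_2|\le N$; the paper does exactly this with $b_1=a_2a_3$, $b_2=a_1a_3$, $b_3=a_1a_2$ in place of your $b_j=L/a_j$, an inessential normalisation. Your observation that only the upper bound $|S|\le 6N+O(1)$ is needed (overlaps between the pieces only help) is also implicit in the paper's argument.
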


\begin{proof}
The idea behind the construction is to split $S$ into three pieces $S_1, S_2 \nd S_3$, of roughly equal size, for which there are many solutions to $a_1x_1 + a_2x_2 + a_3x_3 = 0$ with each $x_i$ taken from $S_i$. Let $M$ be a large integer, which we assume to be divisible by 6. We will define 
\begin{align*}
S_1 &= a_2a_3[-M/6, M/6] \\
S_2 &= a_1a_3[-M/6, M/6] \\
S_3 &= a_1a_2[-M/6, M/6], 
\end{align*}
where $[-M/6, M/6]$ refers to the set of integers with absolute value no greater than $M/6$, and set $S = S_1 \cup S_2 \cup S_3$. Then, $|S|$ is certainly no more than $M$.

However, we may find a large collection of triples $(x_1, x_2, x_3)$ by choosing $x_1 \in S_1 \nd x_2 \in S_2$ arbitrarily, and selecting those for which $x_3 = \frac{-a_1x_1-a_2x_2}{a_3}$ is in $S_3$. If $x_1 = a_2a_3x_1' \nd x_2 = a_1a_3x_2'$, then we have $x_3 = -a_1a_2(x_1' + x_2')$. Therefore, a pair $(x_1', x_2')$ will give rise to a solution precisely when $|x_1' + x_2'| \leq M/6$. 

We may compute the number of such pairs $(x_1', x_2')$ as the sum
$$
\sum_{x_1' = -M/6}^{M/6} M/3 + 1 - |x_1'| = \frac{1}{12}M^2 + O(M).
$$
Thus, the number of triples is at least $\frac{1}{12}|S|^2 + O(|S|)$.
\end{proof}

Given this, it is natural to define the following quantity:
\begin{definition}
Define $\gamma_{a_1,a_2,a_3}$ by
$$
\gamma_{a_1,a_2,a_3} = \limsup_{|S| \rightarrow \infty} \dfrac{T(S)}{|S|^2}
$$
where $S$ runs over subsets of $\Z$.
\end{definition}

Thus, the assertion that
\begin{equation}\label{eqn:gamma_uniform_bound}
\frac{1}{12} \leq \gamma_{a_1, a_2, a_3} \leq \frac{3}{4}
\end{equation}
holds for all $a_1, a_2$ and $a_3$ follows from Proposition \ref{prop:twelfth} and the work of Hardy and Littlewood in \cite{HLIneqNotes}.

As far as the author is aware, exact values for $\gamma_{a_1, a_2, a_3}$ are only known in cases for which $|a_1a_2a_3| \leq 2$ (this includes the cases previously discussed). In particular, we have 
\begin{align}
\gamma_{1,1,\pm 1} &= \frac{3}{4} \label{eqn:trivial_case}\\ 
\gamma_{1,-2,\pm 1} &= \frac{1}{2} \label{eqn:abc_is_2}
\end{align} 
$\gamma_{1,-2,1}$ is \cite[Theorem 1.2]{GreenSisask}, and $\gamma_{1,-2,-1} = \frac{1}{2}$ is \cite[Theorem 2]{LevPinchasi}. The same holds in the third non-equivalent case with $|a_1a_2a_3| = 2$, namely $\gamma_{1, 2, 1} = \frac{1}{2}$. Even the value of $\gamma_{1,2,-3}$ is not known, although the author conjectures that it is $\frac{1}{3}$, which is the value calculated for $S = [-M/2, M/2]$.

The main theorem of this paper is a converse, of sorts, to Proposition \ref{prop:twelfth}. In particular, we will prove the following.

\begin{theorem}\label{thm:main_theorem}
The constant $\frac{1}{12}$ in the statement of Proposition \ref{prop:twelfth} is optimal, in the following sense. For any $\eps > 0$, there exists a choice of $a_1, a_2 \nd a_3$ for which $\gamma_{a_1, a_2, a_3} \leq \frac{1}{12} + \eps$.
\end{theorem}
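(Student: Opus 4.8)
The plan is to recast $T(S)$ as a tripartite additive count and then to exploit the rigidity forced by a careful choice of coefficients. Writing $e(t) = e^{2\pi i t}$ and $F(\theta) = \sum_{x \in S} e(x\theta)$, orthogonality on the circle gives
$$T(S) = \int_0^1 F(a_1\theta)F(a_2\theta)F(a_3\theta)\,d\theta,$$
so that $T(S) = r(a_1S, a_2S, a_3S)$ is the number of triples $(y_1,y_2,y_3) \in (a_1S)\times(a_2S)\times(a_3S)$ with $y_1 + y_2 + y_3 = 0$; that is, the number of additive triples summing to $0$ among the three dilates $a_iS$ of the common set $S$. The three-function rearrangement inequality underlying the work of Gabriel and Lev gives the universal bound $r(a_1S,a_2S,a_3S) \leq r(I,I,I) = (\sigma_3 + o(1))|S|^2 = (\tfrac34 + o(1))|S|^2$, where $I$ is a centred interval on $|S|$ points; this recovers $\gamma_{a_1,a_2,a_3} \leq \tfrac34$. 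The whole difficulty is that this bound is attained only when the three dilates may be simultaneously rearranged to the \emph{same} interval, whereas the $a_iS$ are \emph{linked}: they are images of one set under multiplication by coprime factors, and so cannot all coincide with a full interval.

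Next I would pin down the extremal value and configuration. Passing to the variables $y_i = a_ix_i$, the construction of Proposition \ref{prop:twelfth} is exactly the choice of $S$ for which $a_1S$, $a_2S \nd a_3S$ all contain a common centred interval $J$ on $|S|/3$ lattice points, the relevant solutions being counted by $\#\{y_1+y_2+y_3 = 0 : y_i \in J\}$. By the local central limit theorem, equivalently by the Fourier identity $\int_{-\infty}^{\infty}\sinc(t)^3\,dt = \sigma_3$ for the density at $0$ of a sum of three uniforms, this count is $(\sigma_3 + o(1))|J|^2 = (\tfrac34 + o(1))(|S|/3)^2 = (\tfrac1{12} + o(1))|S|^2$. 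The factor $\tfrac1{k^{k-1}}$ (here $\tfrac19$) is the price of splitting $S$ into $k=3$ equal pieces, which the linkage forces, while $\sigma_k$ (here $\sigma_3 = \tfrac34$) is the intrinsic interval constant for $y_1 + \dots + y_k = 0$. This is the source of the constant $\tfrac{\sigma_k}{k^{k-1}}$ in the general statement.

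The core of the argument is to choose coefficients making the linkage maximally restrictive and then to prove the matching upper bound. I would take $a_1, a_2 \nd a_3$ pairwise coprime and \emph{multiplicatively spread out} --- for instance lying along a sufficiently rapidly growing sequence, or built from disjoint blocks of large primes --- so that no nontrivial small linear relation holds among them and the dilates $a_iS$ are forced to live at incommensurate scales. The plan is then a multi-scale (dyadic) decomposition of $S$: split $S$ according to its scale and residue structure relative to the $a_i$, show that a triple $a_1s_1 + a_2s_2 + a_3s_3 = 0$ with the $a_i$ independent can occur only when the scales of $s_1, s_2, s_3$ are matched in the one way the equation permits, and apply the rearrangement bound \emph{within} each matched scale-block. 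Summing the per-block contributions, each governed by $\sigma_3$, and using that the blocks partition $S$ into pieces whose sizes are constrained by the linkage, should cap the total at $(\tfrac1{12}+\eps)|S|^2$.

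The main obstacle is precisely this uniform, multi-scale control. Unlike a standard circle-method estimate, the dominant contribution is \emph{not} concentrated on the major arc near $\theta = 0$: a single centred interval yields only $\approx |S|^2/a_3$ solutions, and the extremal mass instead comes from the interaction of three pieces across three incommensurate scales. Hence no single-scale continuous relaxation can furnish the bound, and one cannot rearrange the three dilates independently. The hard step is to show that the arithmetic independence of the chosen $a_i$ genuinely prevents any $S$ --- including sets with intricate self-similar or multi-scale structure --- from manufacturing additive triples among its dilates beyond what the $k$-equal-pieces configuration achieves. Establishing this as a robust structural statement, uniform over all large $S$, is where the real work lies; the reduction to the interval constant $\sigma_3$ and the final summation over scales are then comparatively routine.
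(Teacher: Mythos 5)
Your proposal correctly identifies the extremal configuration, the source of the constant $\tfrac{1}{12}=\tfrac14\cdot\tfrac13$, and even the right qualitative hypothesis on the coefficients, but it stops exactly where the proof has to begin: the fourth paragraph concedes that the uniform structural statement is "where the real work lies" without supplying it. Two concrete ingredients are missing. First, for an arbitrary set $S$ there is no a priori notion of "scale", so the step "split $S$ according to its scale and residue structure relative to the $a_i$" is not a defined operation. The paper manufactures the scales via the Green--Sisask arithmetic structure theorem (Theorem \ref{thm:green_sisask_structure}), decomposing $S$ into boundedly many pieces $S_1,\dots,S_n$ of bounded doubling plus a low-energy remainder $S_0$; the "linkage" is then encoded as a labelled digraph on these pieces, with an edge of label $a_i/a_j$ whenever the corresponding bipartite count is large. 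Second, the assertion that a triple can only occur "when the scales are matched in the one way the equation permits" is precisely Lemma \ref{lem:no_bad_cycles}, and it is not a consequence of the $a_i$ having "no nontrivial small linear relation": what is actually needed is that every nontrivial product of ratios $a_i/a_j$ along a cycle is a rational of enormous height, which is why the paper takes $(a_1,a_2,a_3)=(1,M,M+1)$ and then derives a contradiction from Bukh's sums-of-dilates theorem (Theorem \ref{thm:bukh_sod_thm}) combined with Lemma \ref{lem:gs_doubling_lem} and the Ruzsa triangle inequality iterated along the cycle. Without an input of this kind, a set with self-similar multi-scale structure could in principle have pieces participating in several incompatible matchings, and your argument has no way to exclude this.

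There is also a gap in the final summation. The matched blocks are not disjoint: a class $R_d$ of fixed depth appears in up to three terms $T(R_{d'/a_1},R_{d'/a_2},R_{d'/a_3})$ (for $d'=a_id$), so one cannot simply "sum the per-block contributions, each governed by $\sigma_3$". The paper instead bounds each block by the symmetric estimate $T(R_1,R_2,R_3)\leq\tfrac14(r_1r_2+r_2r_3+r_3r_1+1)$ of Lemma \ref{lem:uniform_bound_symmetric}, observes that the depths fall into three residue classes (via $e_1-e_2\bmod 3$ for $d=M^{e_1}(M+1)^{e_2}$) with linked depths always in distinct classes, and concludes by Cauchy--Schwarz that the total is at most $\tfrac14(X_0X_1+X_0X_2+X_1X_2)\leq\tfrac1{12}|S|^2$. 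This tripartite colouring is where the factor $\tfrac13$ actually enters, and it has no counterpart in your sketch. In short: the heuristic is the right one, but the structure theorem, the cycle rigidity via Bukh's theorem, and the global colouring argument all need to be supplied before this becomes a proof.
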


In view of this theorem, (\ref{eqn:gamma_uniform_bound}) gives the best possible bounds on $\gamma_{a_1, a_2, a_3}$ that are independent of the coefficients $a_i$.

The plan for this paper is as follows. In Section \ref{sec:import_lemmas}, we will record some additive combinatorial lemmas that we will need in order to establish Theorem \ref{thm:main_theorem}. In Section \ref{sec:proof_of_thm}, we will use these lemmas to prove Theorem \ref{thm:main_theorem}.

One might also ask about generalising Theorem \ref{thm:main_theorem} to other settings. For instance, given a system of $m$ linear equations in $k$ variables (where we assume that $m \leq k - 2$), can we prove an analogue of Theorem \ref{thm:main_theorem}?

If $m = 1$, then an analogue of Proposition \ref{prop:twelfth} holds for any value of $k \geq 3$. Set 
\begin{equation}\label{eqn:definition_of_sigma}
\sigma_k = \int_{-\infty}^{\infty} \left(\frac{\sin \pi x}{\pi x} \right)^k dx.
\end{equation}
Then, for any choice of coefficients $a_1, \dots, a_k$, there are sets $S$ with at least $\frac{\sigma_k}{k^{k-1}}|S|^{k-1} + O(|S|^{k-2})$ solutions to $a_1x_1 + \dots + a_kx_k = 0$. We will discuss (\ref{eqn:definition_of_sigma}) further in Section \ref{sec:generalisation_to_k_var}.

Furthermore, the corresponding analogue of Theorem \ref{thm:main_theorem} holds. For any $\eps > 0$, there are choices of coefficients $a_1, \dots, a_k$ for which any large set $S$ must have no more than $\left(\frac{\sigma_k}{k^{k-1}} + \varepsilon\right)|S|^{k-1}$ solutions. For instance, for any small positive $\eps$ we can find coefficients $a_1, a_2, a_3 \nd a_4$ with the property that 
$
T(S) \leq \left( \frac{1}{96} + \eps \right) |S|^3,
$
where $T(S)$ counts the number of solutions to $a_1x_1 + a_2x_2 + a_3x_3 + a_4x_4 = 0.$ We will discuss this in Section \ref{sec:generalisation_to_k_var}.

On the other hand, the opposite is true in the case that $m > 1$. Indeed, it is possible to show that there is \emph{no} constant $c > 0$, such that for any system of 2 equations in 4 variables, there are large sets $S$ with at least $c |S|^2$ solutions to the system. We will prove this fact in Section \ref{sec:generalisation_to_systems}.

\subsection*{Notation}

As we have already noted, $T(S)$ will be the number of solutions to $a_1x_1 + a_2x_2 + a_3x_3$ in S. We can extend this by defining $T(S_1, S_2, S_3)$ to be the number of solutions to $a_1x_1 + a_2x_2 + a_3x_3$, where $x_i \in S_i$.

We will use the notation $a \cdot S$ to denote the set $\{ax, x \in S\}$.

We will also make frequent use of the Vinogradov notation $f \ll g$ to mean that $f = O(g)$. When the $\ll$ is subscripted, we allow the implicit constant to depend on the subscripts.

The author is supported by an EPSRC grant EP/N509711/1. The author would like to thank his supervisor, Ben Green, for his continued support and encouragement, and the anonymous referee for a thorough reading of a previous version of this paper.

This version of the paper replaces a previous version \cite{me}. The argument used to prove Theorem \ref{thm:main_theorem} is replaced with a new argument which avoids appealing to the arithmetic regularity lemma (and can handle a wider class of equations), and the results of Section \ref{sec:generalisation_to_systems} are new to this version.

\section{Additive Combinatorial Lemmas}\label{sec:import_lemmas}

In this section, we will collect some lemmas that will be necessary for the proof of Theorem \ref{thm:main_theorem}.

For any set $A \subseteq \Z$, let $\delta[A]$ be its growth under the differencing operator, $\frac{|A-A|}{|A|}$. If $A \nd B$ are two sets of integers, let the \emph{additive energy} between $A \nd B, E(A, B)$, be defined by
\[
E(A,B) = \# \{(a_1,b_1,a_2,b_2) \in A \times B \times A \times B: a_1 + b_1 = a_2 + b_2\}.
\]
It is easy to see that this satisfies the following inequalities:
\begin{nalign}\label{eqn:bound_energy}
E(A,B) &\leq |A|^2|B| \\
E(A,B) &\leq |A||B|^2 \\
E(A,B) &\leq |A|^{3/2}|B|^{3/2},
\end{nalign}
the third of which follows immediately from the first two.

We will require the following lemma, which states that, when two sets $A \nd B$ have $\delta[A] \nd \delta[B]$ small, and if $E(A, B)$ is large, then $|A - B|$ is also small.
\begin{lemma}[\protect{\cite[Lemma 3.1 (iv)]{GreenSisask}}]\label{lem:gs_doubling_lem}
Suppose that $A, B \subseteq \Z$ are sets with $E(A, B) \geq \eta |A|^{3/2}|B|^{3/2}$.

Then, $|A - B| \leq \frac{\delta[A]\delta[B]}{\eta} |A|^{1/2}|B|^{1/2}$.
\end{lemma}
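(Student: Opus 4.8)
The plan is to convert the additive-energy hypothesis into the existence of a single auxiliary set that sits simultaneously inside $A$ and inside a translate of $B$, and then to read off the bound on $|A-B|$ from Ruzsa's triangle inequality. The first step is to rewrite the energy as an overlap count. Setting $r(t) = |A \cap (B+t)|$, one checks directly that $r(t) = \#\{(a,b) \in A \times B : a - b = t\}$, so that
\[
E(A,B) = \sum_t r(t)^2, \qquad \sum_t r(t) = |A||B|.
\]
Since $\sum_t r(t)^2 \leq \big(\max_t r(t)\big)\sum_t r(t)$, a one-line averaging argument produces a popular translate $t_0$ with
\[
|A \cap (B+t_0)| = r(t_0) \geq \frac{E(A,B)}{|A||B|} \geq \frac{\eta |A|^{3/2}|B|^{3/2}}{|A||B|} = \eta |A|^{1/2}|B|^{1/2}.
\]

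Now I would set $C = A \cap (B+t_0)$, so that $C \subseteq A$, $C \subseteq B+t_0$, and $|C| \geq \eta|A|^{1/2}|B|^{1/2}$, and invoke Ruzsa's triangle inequality with $C$ as the pivot,
\[
|A-B| \leq \frac{|A-C|\,|C-B|}{|C|}.
\]
The two numerator factors are controlled purely by the doubling hypotheses. Since $C \subseteq A$ we have $A - C \subseteq A - A$, hence $|A-C| \leq |A-A| = \delta[A]|A|$; and since $C \subseteq B + t_0$ we have $C - B \subseteq (B-B) + t_0$, hence $|C-B| \leq |B-B| = \delta[B]|B|$. Combining these with the lower bound on $|C|$ gives
\[
|A-B| \leq \frac{\delta[A]|A| \cdot \delta[B]|B|}{\eta|A|^{1/2}|B|^{1/2}} = \frac{\delta[A]\delta[B]}{\eta}|A|^{1/2}|B|^{1/2},
\]
which is exactly the claimed bound.

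The main conceptual point is to recognise that one must \emph{not} attempt to bound $|A-B|$ directly from the energy: applying Cauchy--Schwarz to $\sum_t r(t)$ and $\sum_t r(t)^2$ yields only $|A-B| \geq |A|^2|B|^2/E(A,B)$, a lower bound of the wrong direction, and no purely $\ell^2$ manipulation will reverse it. The doubling hypotheses are genuinely necessary and must enter through the additive geometry of $A$ and $B$ rather than through counting. The device that makes this work is to spend the energy on extracting the single rich overlap set $C$, so that the factor $\eta$ enters only through $|C|$, while the smallness of $|A-A|$ and $|B-B|$ controls the two legs $A-C$ and $C-B$ of the Ruzsa triangle. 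The only external input is Ruzsa's triangle inequality, which is standard; should one want the argument self-contained, it follows from the injection $C \times (A-B) \to (A-C) \times (C-B)$ sending $(c,d) \mapsto (a_d - c,\, c - b_d)$, where for each $d \in A-B$ one fixes a representation $d = a_d - b_d$ with $a_d \in A$, $b_d \in B$; the map is injective because the two coordinates sum to $d$, which recovers $a_d, b_d$ and hence $c$.
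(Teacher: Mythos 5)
Your proof is correct. The paper does not prove this lemma at all --- it imports it directly from Green and Sisask \cite[Lemma 3.1 (iv)]{GreenSisask} --- and your argument (extract a popular difference $t_0$ from the energy to get a large set $C = A \cap (B+t_0)$, then pivot on $C$ in the Ruzsa triangle inequality) is essentially the standard proof given in that cited source, carried out with the constants tracked correctly.
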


We will also require a weak form of a structure theorem due to Green and Sisask.
\begin{theorem}[\protect{\cite[Proposition 3.2]{GreenSisask}}]\label{thm:green_sisask_structure}
Let $\eps_1 \in (0,1/2)$ be a parameter. Then there are choices of (large) integers $K_1 = K_1(\eps_1) \nd K_2 = K_2(\eps_1)$ with the following property. For any set $S \subseteq \Z$, there is a decomposition of $S$ as a disjoint union $S_1 \amalg \dots \amalg S_n \amalg S_0$ such that
\begin{enumerate}
\item \textup{(Components are large)} $|S_i| \geq |S|/K_1$ for $i=1,\dots,n$;
\item \textup{(Components are structured)} $\delta[S_i] \leq K_2$ for $i = 1,\dots,n$;
\item \textup{(Noise term)} $E(S_0,S) \leq \eps_1 |S|^3$.
\end{enumerate}

Observe that property (1) guarantees that $n \leq K_1$.
\end{theorem}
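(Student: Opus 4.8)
The plan is to prove the theorem by a greedy extraction scheme: repeatedly peel off large, structured pieces from $S$ until the remaining set is ``noise'' in the sense of having small additive energy with $S$. The engine of the argument is the following extraction step, which I would isolate as a lemma.

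\emph{Extraction step.} There exist $K_1 = K_1(\eps_1)$ and $K_2 = K_2(\eps_1)$ such that whenever $R \subseteq S$ satisfies $E(R, S) > \eps_1 |S|^3$, there is a subset $A \subseteq R$ with $|A| \geq |S|/K_1$ and $\delta[A] \leq K_2$. To prove this, I would first note from the bound $E(R,S) \leq |R||S|^2$ in (\ref{eqn:bound_energy}) that $|R| > \eps_1 |S|$, so $R$ is already large. Next I would convert the cross-energy $E(R,S)$ into self-energy of $R$ by Cauchy--Schwarz: writing $E(R,S) = \sum_d r_{R-R}(d)\, r_{S-S}(d)$, where $r_{X-X}(d)$ counts representations of $d$ as a difference of two elements of $X$, one has $E(R,S) \leq E(R,R)^{1/2} E(S,S)^{1/2}$. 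Since $E(S,S) \leq |S|^3$, this yields $E(R,R) \geq E(R,S)^2/|S|^3 > \eps_1^2 |S|^3 \geq \eps_1^2 |R|^3$. Thus $R$ has additive energy $\gg_{\eps_1} |R|^3$, and the Balog--Szemer\'edi--Gowers theorem produces a subset $A \subseteq R$ with $|A| \gg_{\eps_1} |R|$ and $|A - A| \ll_{\eps_1} |A|$. Combining $|A| \gg_{\eps_1} |R| > \eps_1 |S|$ with the doubling bound gives the claim with suitable $K_1, K_2$ depending only on $\eps_1$.

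\emph{Iteration.} With the extraction step in hand, I would set $R \leftarrow S$ and repeat: while $E(R,S) > \eps_1 |S|^3$, apply the extraction step to produce the next component $S_i \subseteq R$, and replace $R$ by $R \setminus S_i$; when the loop halts, set $S_0 = R$. Each $S_i$ obeys (1) and (2) by construction, the pieces are disjoint since each is removed before the next is chosen, and the halting condition gives exactly (3). Termination --- and the final remark $n \leq K_1$ --- follows because the $S_i$ are disjoint subsets of $S$, each of size at least $|S|/K_1$, so there can be at most $K_1$ of them; in particular the process stops after finitely many steps.

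The main obstacle is the extraction step, and specifically the passage from ``large additive energy'' to ``a large subset with bounded doubling'': this is precisely the content of the Balog--Szemer\'edi--Gowers theorem, which is the one genuinely non-trivial input. The Cauchy--Schwarz reduction from the asymmetric energy $E(R,S)$ to the self-energy $E(R,R)$ is what makes the symmetric form of that theorem directly applicable, and I would expect the remaining work --- tracking how $K_1$ and $K_2$ depend on $\eps_1$ through the Balog--Szemer\'edi--Gowers bounds --- to be routine bookkeeping.
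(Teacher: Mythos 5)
The paper does not prove this statement at all---it is imported verbatim from \cite[Proposition 3.2]{GreenSisask}---and your argument (Cauchy--Schwarz to convert the cross-energy $E(R,S) > \eps_1|S|^3$ into $E(R,R) \gg_{\eps_1} |R|^3$, Balog--Szemer\'edi--Gowers to extract a large subset of bounded doubling, then greedy iteration until the remainder has small energy with $S$) is correct and is essentially the proof given in that cited source. No gaps; the uniformity of $K_1, K_2$ across iterations is fine because your extraction threshold is stated in terms of $|S|$ rather than the shrinking set $R$.
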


The quantity $T(S_1, S_2, S_3)$ is related to the additive energy via the following lemma.
\begin{lemma}\label{lem:easy_TNRG_lemma}
Suppose that $S_1, S_2 \nd S_3 \subseteq \Z$ are finite sets. Then
$$ 
T(S_1, S_2, S_3)^2 \leq E(a_1\cdot S_1, a_2\cdot S_2)|S_3|.
$$
\end{lemma}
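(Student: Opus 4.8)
The plan is to interpret $T(S_1,S_2,S_3)$ as a weighted count of additive representations and then apply the Cauchy--Schwarz inequality. Write $A = a_1\cdot S_1$ and $B = a_2\cdot S_2$. Since $a_1$ and $a_2$ are nonzero, multiplication by $a_i$ is injective, so $|A| = |S_1|$ and $|B| = |S_2|$, and a solution $(x_1,x_2,x_3)$ of $a_1x_1+a_2x_2+a_3x_3 = 0$ corresponds precisely to a pair $(u,v) = (a_1x_1, a_2x_2) \in A\times B$ satisfying $u + v = -a_3x_3$. First I would introduce the representation function $r(t) = \#\{(u,v)\in A\times B : u+v = t\}$, so that for each fixed $x_3 \in S_3$ the number of solutions with third coordinate $x_3$ is exactly $r(-a_3x_3)$, and hence $T(S_1,S_2,S_3) = \sum_{x_3\in S_3} r(-a_3x_3)$.

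Next I would apply Cauchy--Schwarz to this sum, pairing the all-ones vector of length $|S_3|$ against the vector $\bigl(r(-a_3x_3)\bigr)_{x_3\in S_3}$. This gives
$$
T(S_1,S_2,S_3)^2 \leq |S_3| \sum_{x_3\in S_3} r(-a_3x_3)^2.
$$
Since $a_3\neq 0$, the map $x_3\mapsto -a_3x_3$ is injective, so the values $-a_3x_3$ are distinct as $x_3$ ranges over $S_3$; the inner sum is therefore at most $\sum_{t\in\Z} r(t)^2$. The final step is the standard identity $\sum_{t} r(t)^2 = E(A,B)$, which simply re-expresses both sides as the number of quadruples $(u_1,v_1,u_2,v_2)\in A\times B\times A\times B$ with $u_1+v_1 = u_2+v_2$. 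Combining these observations yields $T(S_1,S_2,S_3)^2 \leq E(a_1\cdot S_1, a_2\cdot S_2)|S_3|$, as required.

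I do not expect a genuine obstacle here, since the statement is a direct consequence of Cauchy--Schwarz; the estimate is essentially tight when the representations are spread evenly over $S_3$. The only points needing (minor) care are verifying that the dilations $S_i \mapsto a_i\cdot S_i$ are bijective, so that passing to $A$ and $B$ preserves the relevant counts, and invoking the injectivity of $x_3\mapsto -a_3x_3$ to dominate the restricted sum of squares by the full additive energy.
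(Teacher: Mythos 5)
Your proof is correct and is essentially identical to the paper's: both express $T(S_1,S_2,S_3)$ as a sum of the representation function $\mu(t)=r(t)$ over $t\in -a_3\cdot S_3$ and apply Cauchy--Schwarz together with the identity $\sum_t \mu(t)^2 = E(a_1\cdot S_1, a_2\cdot S_2)$. The extra remarks about injectivity of the dilations are fine but add nothing beyond what the paper leaves implicit.
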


\begin{proof}
For any $t \in \Z$, let $\mu(t)$ denote the number of ways of writing $t = a_1x_1 + a_2x_2$, for $x_i \in S_i$. Thus, by definition, 
\[E(a_1\cdot S_1, a_2\cdot S_2) = \sum_{t} \mu(t)^2.\]
Now, we see that 
\begin{equation*}
T(S_1, S_2, S_3)^2 = \left(\sum_{t \in -a_3 \cdot S_3} \mu(t)\right)^2 \leq \left(\sum_t \mu(t)^2\right)|S_3| = E(a_1\cdot S_1, a_2\cdot S_2) |S_3|,
\end{equation*}
the inequality following from Cauchy-Schwarz. This completes the proof of Lemma \ref{lem:easy_TNRG_lemma}.
\end{proof}

The following two facts are standard results in additive combinatorics.
\begin{lemma}[Ruzsa triangle inequality, \protect{\cite[Lemma 2.6]{TaoVu}}]\label{lem:ruzsa_triangle}
For sets $A, B, C \subseteq \Z$,
\[
|A - C| |B| \leq |A - B||B - C|.
\]
\end{lemma}

\begin{lemma}[Energy Cauchy-Schwarz, \protect{\cite[(2.9)]{TaoVu}}]\label{lem:energy_cs}
For sets $A, B \subseteq \Z$,
\[
E(A, B) \leq E(A,A)^{1/2} E(B,B)^{1/2}.
\]
\end{lemma}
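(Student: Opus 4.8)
The plan is to re-express the additive energy as a correlation between difference-counting functions and then apply the ordinary Cauchy--Schwarz inequality. For a finite set $X \subseteq \Z$ and an integer $d$, write $r_X(d) = \#\{(x, x') \in X \times X : x - x' = d\}$ for the number of ordered representations of $d$ as a difference of two elements of $X$. The quantities $E(A,A)$ and $E(B,B)$ will be recognised as the $\ell^2$ norms (squared) of $r_A$ and $r_B$, and $E(A,B)$ as their inner product.

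First I would rewrite $E(A,B)$ by reorganising its defining quadruples according to a common difference. The condition $a_1 + b_1 = a_2 + b_2$ is equivalent to $a_1 - a_2 = b_2 - b_1$, so setting $d = a_1 - a_2 = b_2 - b_1$ and summing over $d$ gives
\[
E(A,B) = \sum_{d} \#\{(a_1, a_2) \in A^2 : a_1 - a_2 = d\} \cdot \#\{(b_1, b_2) \in B^2 : b_2 - b_1 = d\} = \sum_{d} r_A(d)\, r_B(d),
\]
where the last equality uses that the map $(b_1,b_2) \mapsto (b_2,b_1)$ identifies the second count with $r_B(d)$. Specialising this identity to $B = A$ yields $E(A,A) = \sum_d r_A(d)^2$, and likewise $E(B,B) = \sum_d r_B(d)^2$.

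With these two observations in hand, the conclusion is immediate from Cauchy--Schwarz applied to the sum over $d$:
\[
E(A,B) = \sum_{d} r_A(d)\, r_B(d) \leq \left( \sum_{d} r_A(d)^2 \right)^{1/2} \left( \sum_{d} r_B(d)^2 \right)^{1/2} = E(A,A)^{1/2}\, E(B,B)^{1/2}.
\]

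The only genuine content is the bookkeeping in the first step: verifying that partitioning the quadruples by their common value $d$ really does factor the count into the product $r_A(d) r_B(d)$, with the difference in $B$ taken in the correct order. Since $r_X$ is symmetric ($r_X(d) = r_X(-d)$), the orientation causes no trouble. Once the energies are written as sums over $d$, there is no substantive obstacle remaining, as the inequality is a single application of Cauchy--Schwarz; I would therefore expect the reformulation, rather than the estimate itself, to be the step requiring the most care.
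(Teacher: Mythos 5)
Your proof is correct: the identity $E(A,B) = \sum_d r_A(d) r_B(d)$ is verified properly (the orientation of the difference in $B$ is handled correctly via the symmetry $r_B(d) = r_B(-d)$), and the inequality then follows from a single application of Cauchy--Schwarz. The paper does not prove this lemma --- it cites it as a standard fact from Tao and Vu --- and your argument is precisely the standard proof given there, so there is nothing to compare beyond noting agreement.
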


We will require the following lemma bounding $T(S_1, S_2, S_3)$.
\begin{lemma}\label{lem:uniform_bound_symmetric}
Suppose that $S_1, S_2, S_3 \subseteq \Z$ are sets with sizes $s_1, s_2 \nd s_3$ respectively. Then, we have the bound
\begin{equation}\label{eqn:uniform_bound}
T(S_1, S_2, S_3) \leq \frac{1}{4}\left(s_1s_2 + s_2s_3 + s_1s_3 + 1\right).
\end{equation}
\end{lemma}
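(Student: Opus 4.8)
The plan is to strip out the coefficients, reduce to the case of centred intervals by a rearrangement inequality, and then carry out an elementary count. First I would substitute $y_i = a_ix_i$ and set $Y_i = a_i\cdot S_i$, so that $|Y_i| = s_i$ (since $a_i \neq 0$) and
\[
T(S_1,S_2,S_3) = \#\{(y_1,y_2,y_3) \in Y_1\times Y_2\times Y_3 : y_1 + y_2 + y_3 = 0\}.
\]
In this form the quantity is symmetric in the three sets, and the target bound is symmetric in $s_1,s_2,s_3$. The three trivial bounds $T \le s_is_j$ only combine to give $T \le \frac13(s_1s_2+s_2s_3+s_1s_3)$, so the real content is the improvement of the constant from $\frac13$ to $\frac14$. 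This is exactly the gain one expects from an extremal interval configuration: for centred intervals of a common odd size $s$ one counts $3\bigl(\frac{s-1}{2}\bigr)^2 + 3\cdot\frac{s-1}{2} + 1 = \frac14(3s^2+1)$ solutions, meeting the bound with equality.

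The key step is to reduce to the case where each $Y_i$ is an interval centred at $0$. Writing the count as $\sum_t (\1_{Y_1} * \1_{Y_2})(t)\,\1_{-Y_3}(t)$, it has the shape $\sum_{y_1+y_2+y_3=0} f(y_1)g(y_2)h(y_3)$ with $f,g,h$ indicator functions, which is precisely the setting of the Riesz rearrangement inequality (discussed in \cite{HLP}). Replacing each $Y_i$ by the centred interval $I_i$ of the same cardinality can therefore only increase the count; equivalently, one may run a direct compression argument, moving each set towards such an interval one element at a time without decreasing the number of solutions. The one wrinkle is that for sets of even size the symmetric decreasing rearrangement is not symmetric about $0$; since we only need an upper bound I would phrase the reduction through compressions and absorb this off-by-one into the constant.

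It then remains to bound the count for centred intervals $I_1,I_2,I_3$ of sizes $s_1 \le s_2 \le s_3$. Here the count equals $\sum_{t\in -I_3} f(t)$, where $f = \1_{I_1}*\1_{I_2}$ is the symmetric, unimodal, trapezoidal function of total mass $s_1s_2$, peak $s_1$, and support of size $s_1+s_2-1$; as $I_3$ is centred, this is the sum of the largest $s_3$ values of $f$. When $s_3 \ge s_1+s_2-1$ the count is at most the whole mass $s_1s_2$, and the inequality $s_1s_2 \le \frac14(s_1s_2+s_2s_3+s_1s_3+1)$ reduces, after substituting the worst case $s_3 = s_1+s_2-1$, to $(s_1-1)(s_2-1) + (s_1-s_2)^2 \ge 0$. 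In the complementary range $s_3 < s_1+s_2-1$ one evaluates the partial sum directly by subtracting the two triangular tails of $f$ that fall outside $I_3$. I expect this final interval computation to be the main obstacle: it is elementary but splits into cases according to the ordering of the $s_i$ and the parities of the sizes, and it is exactly these parity corrections that pin down the additive constant $+1$ and force equality to hold only for equal odd sizes.
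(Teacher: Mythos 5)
Your argument is correct in outline, but it takes a genuinely different route from the paper's. The paper handles $s_3 \geq s_1+s_2$ with the trivial bound $T \leq s_1s_2 \leq \frac{1}{4}(s_1+s_2)^2 \leq \frac{1}{4}(s_1s_3+s_2s_3)$, and in the complementary range simply quotes \cite[Lemma 2]{LevPinchasi}, which bounds $T$ for \emph{arbitrary} sets by $\frac{1}{4}\bigl(2(s_1s_2+s_2s_3+s_3s_1)-(s_1^2+s_2^2+s_3^2)+1\bigr)$; the stated inequality then drops out of $s_1^2+s_2^2+s_3^2 \geq s_1s_2+s_2s_3+s_3s_1$. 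You instead re-derive the extremal structure: rearrange to near-centred intervals and count. That reduction is precisely \cite[Theorem 1]{lev_max} (in the tradition of Hardy--Littlewood and Gabriel), which this paper itself invokes in the proof of the $k$-variable analogue, Lemma \ref{lem:uniform_bound_symmetric_k_var}; you should cite it there rather than plan to reprove it by ad hoc compressions, since the three-set discrete rearrangement inequality is not a routine exercise. Your parity worry can be sidestepped entirely: the multiset of values of $\1_{I_1}\ast\1_{I_2}$ depends only on $s_1,s_2$ (a trapezoid $1,1,2,2,\dots$ with plateau $\min(s_1,s_2)$ and $s_1+s_2-1$ terms), and $T$ is at most the sum of its largest $s_3$ values wherever $I_3$ sits; writing $s_1+s_2-1-s_3=2m$ or $2m+1$, this is $s_1s_2-m(m+1)$ or $s_1s_2-(m+1)^2$, and the comparison with $\frac{1}{4}(s_1s_2+s_2s_3+s_1s_3+1)$ reduces to a perfect square being nonnegative, so the deficient case does close without any further case split. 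Your identity $(s_1-1)(s_2-1)+(s_1-s_2)^2\geq 0$ for the abundant case is also correct. The trade-off is clear: the paper's proof is shorter because the combinatorial content is outsourced to Lev--Pinchasi, while yours is more self-contained modulo the rearrangement theorem and makes visible where the constant $\frac{1}{4}$, the $+1$, and the equality cases come from.
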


\begin{proof}
We will first prove Lemma \ref{lem:uniform_bound_symmetric} in the case that $a_1, a_2 \nd a_3$ are all 1.

Without loss of generality, assume that $s_1 \leq s_2 \leq s_3$.

Suppose first that $s_3 \geq s_1 + s_2$. In that case, we have
\begin{align*}
T(S_1, S_2, S_3) &\leq s_1s_2 \\
&\leq \frac{1}{4} (s_1 + s_2)^2 \\
&\leq \frac{1}{4} (s_1s_3 + s_2s_3).
\end{align*}
The first line follows from the trivial observation that for each pair of $x \in S_1 \nd y \in S_2$, there can be at most one solution to $x + y + z = 0$ with $z \in S_3$. The third line follows from our assumption on $s_3$. Thus, (\ref{eqn:uniform_bound}) follows in this case.

Now, suppose that $s_3 < s_1 + s_2$. In this case, we may apply \cite[Lemma 2]{LevPinchasi}, which states that
\begin{equation}
T(S_1, S_2, S_3) \leq \frac{2(s_1s_2 + s_2s_3 + s_3s_1) - (s_1^2 + s_2^2 + s_3^2) + 1}{4}.
\end{equation}
(\ref{eqn:uniform_bound}) follows in this case via an easy application of the Cauchy-Schwarz inequality.

Finally, for arbitrary coefficients $a_1, a_2 \nd a_3$, observe that 
\begin{align*}
T_{a_1, a_2, a_3}(S_1, S_2, S_3) &= T_{1,1,1}(a_1 \cdot S_1, a_2 \cdot S_2, a_3 \cdot S_3) \\
&\leq \frac{1}{4}\left(s_1s_2 + s_2s_3 + s_1s_3 + 1\right).
\end{align*}
This completes the proof of Lemma \ref{lem:uniform_bound_symmetric}.
\end{proof}

Finally, we will require the following theorem of Bukh:
\begin{theorem}[\protect{\cite[Theorem 1.2]{Bukh_sod}}]\label{thm:bukh_sod_thm}
Given two coprime integers $\lambda_1 \nd \lambda_2$, we have that for any $S \subseteq \Z$,
\[ 
    |\lambda_1 \cdot S + \lambda_2 \cdot S| \geq (|\lambda_1| + |\lambda_2|)|S| - o_{\lambda_1, \lambda_2}(|S|)
\]
\end{theorem}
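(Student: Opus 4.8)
The plan is to reduce to the case of positive coprime dilates and then extract the constant $|\lambda_1| + |\lambda_2|$ from a residue-class decomposition, the value of the constant being explained by the continuous analogue. Since $|\lambda_1 \cdot S + \lambda_2 \cdot S| = \big||\lambda_1| \cdot S + |\lambda_2| \cdot S\big|$ when the signs agree, and the decomposition below is insensitive to the sign of $\lambda_2$ when they differ, I would assume $\lambda_1, \lambda_2 > 0$. The guiding intuition is the continuous statement: for a measurable $A \subseteq \R$ of measure $\mu$, one-dimensional Brunn--Minkowski gives $\mu(\lambda_1 \cdot A + \lambda_2 \cdot A) \geq \mu(\lambda_1 \cdot A) + \mu(\lambda_2 \cdot A) = (\lambda_1+\lambda_2)\mu$. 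Coprimality is exactly the hypothesis that prevents the discrete set from collapsing onto a proper sublattice and thereby cheating this bound: note that for $\lambda_1 = \lambda_2 = 2$ and $S$ an interval, $|2\cdot S + 2\cdot S| \sim 2|S|$ rather than $4|S|$.

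The concrete engine is a double residue decomposition. Writing $N = |S|$, I would partition $S = \bigsqcup_{c} S_c$ according to $c = s \bmod \lambda_1$, and write each nonempty piece as $S_c = c + \lambda_1 \cdot U_c$ with $|U_c| = |S_c|$. Because $\gcd(\lambda_1,\lambda_2) = 1$, the sets $\lambda_1 \cdot S + \lambda_2 \cdot S_c$ occupy distinct residues $\lambda_2 c \bmod \lambda_1$ and are therefore pairwise disjoint, while $\lambda_1 \cdot S + \lambda_2 \cdot S_c = \lambda_2 c + \lambda_1 \cdot (S + \lambda_2 \cdot U_c)$. Hence $|\lambda_1\cdot S + \lambda_2 \cdot S| = \sum_{c \in R} |S + \lambda_2 \cdot U_c|$, where $R$ is the set of occupied residues and $\rho := |R|$. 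Repeating the decomposition on the inner sumset, this time modulo $\lambda_2$ (writing $S = \bigsqcup_d S^{(d)}$ with $S^{(d)} = d + \lambda_2 \cdot V_d$), the same disjointness together with the elementary bound $|V_d + U_c| \geq |V_d| + |U_c| - 1$ yields $|S + \lambda_2 \cdot U_c| \geq N + \rho'|U_c| - \rho'$, where $\rho' \leq \lambda_2$ is the number of residues of $S$ modulo $\lambda_2$. Summing over $c$ gives the clean bound $|\lambda_1 \cdot S + \lambda_2 \cdot S| \geq (\rho + \rho')N - \rho\rho'$. In particular, whenever $S$ meets every residue class modulo $\lambda_1$ and modulo $\lambda_2$ — so that $\rho = \lambda_1$ and $\rho' = \lambda_2$ — this already gives the theorem with error $O(1)$, which is stronger than needed.

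The main obstacle is the non-equidistributed case, where $S$ occupies only $\rho < \lambda_1$ residues modulo $\lambda_1$ (or $\rho' < \lambda_2$ modulo $\lambda_2$) and the bound above degrades. The fully concentrated subcase $\rho = 1$ is harmless: there $S = c + \lambda_1 \cdot U$ for a single $c$, so $\lambda_1 \cdot S + \lambda_2 \cdot S = (\lambda_1+\lambda_2)c + \lambda_1 \cdot(\lambda_1 \cdot U + \lambda_2 \cdot U)$ and I recurse on $U$, whose diameter has shrunk by a factor $\lambda_1$; coprimality is preserved, so the descent terminates. The genuinely hard regime is the intermediate one, $1 < \rho < \lambda_1$, where no exact rescaling is available and one cannot expect an error term better than $o(N)$. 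Here I would argue that missing residue classes force compensating sumset growth, quantifying this either by a telescoping estimate for the inner sums $|S + \lambda_2 \cdot U_c|$ in terms of the correlations $|S \cap (S - t)|$ followed by an averaging step, or — more robustly — by the limiting transfer to the continuous statement: rescale $S$ into a bounded interval, pass to a limiting measure, apply the Brunn--Minkowski bound above, and account for the discretisation loss, which is precisely what produces the $o_{\lambda_1,\lambda_2}(|S|)$ error. Controlling this loss uniformly, while using coprimality to rule out sublattice concentration at every scale, is the crux of the argument.
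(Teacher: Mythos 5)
First, a point of reference: the paper does not prove this statement at all --- it is imported verbatim as \cite[Theorem 1.2]{Bukh_sod} and used as a black box --- so your attempt can only be measured against Bukh's original argument. Your opening move is correct and genuinely useful: the double residue decomposition does yield $|\lambda_1\cdot S+\lambda_2\cdot S|\geq(\rho+\rho')|S|-\rho\rho'$, where $\rho$ and $\rho'$ count the occupied residue classes modulo $\lambda_1$ and $\lambda_2$; this disposes of the equidistributed case with an $O(1)$ error, and the $\rho=1$ rescaling-and-recursion step is also sound. This is essentially the standard first move for sums of dilates.

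The gap is that the intermediate regime $1<\rho<|\lambda_1|$ (or $1<\rho'<|\lambda_2|$) is the \emph{entire} content of the theorem, and there you do not give a proof: you name two candidate strategies and defer both. The second of these --- rescale $S$, pass to a limiting measure, apply one-dimensional Brunn--Minkowski, and control the discretisation loss --- cannot work as described. The continuous inequality $\mu(\lambda_1\cdot A+\lambda_2\cdot A)\geq(\lambda_1+\lambda_2)\mu(A)$ holds with no coprimality hypothesis whatsoever, while the discrete statement fails without it (your own $\lambda_1=\lambda_2=2$ example); hence all of the arithmetic content of the theorem would have to be injected precisely in the ``discretisation loss'' step that you leave open. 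Moreover a generic finite $S$, rescaled into a bounded interval, has no limiting measure of positive mass, so there is no continuous object to which Brunn--Minkowski applies. The first strategy (telescoping via correlations $|S\cap(S-t)|$ plus averaging) is likewise not carried out, and it is not clear it closes either: as your own bound shows, a set occupying two residue classes modulo a large $\lambda_2$ gains only $O(|S|)$ from disjointness, and the required growth must instead be extracted from the interaction of the pieces $U_c$, $V_d$ across scales. Bukh's actual proof handles exactly this regime by an induction on the size of the coefficients: one factors a prime $p$ out of one of the $\lambda_i$, splits $S$ into residue classes modulo $p$, and runs a dichotomy (many well-occupied classes versus concentration on few classes), using Pl\"unnecke--Ruzsa-type sumset inequalities in the concentrated case to recurse with smaller coefficients; the $o_{\lambda_1,\lambda_2}(|S|)$ error term is precisely the cost accumulated by that induction. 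That inductive mechanism, not a continuous limit, is the missing idea.
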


\section{Proof of Theorem \ref{thm:main_theorem}}\label{sec:proof_of_thm}

In this section, we will use the lemmas of Section \ref{sec:import_lemmas} to prove Theorem \ref{thm:main_theorem}. We must prove that, given a suitable choice of $a_1, a_2 \nd a_3$, all sufficiently large sets $S$ have $T(S) \leq \left(\frac{1}{12} + \eps \right)|S|^2$.

Let $\eps > 0$. Given our choice of $\eps$, we must choose the values of the coefficients $a_1, a_2 \nd a_3$; we will do so later. Suppose that $S$ is a sufficiently large set. We will immediately apply the structure theorem, Theorem \ref{thm:green_sisask_structure}, to $S$, with $\eps_1 = \left(\frac{\eps}{6}\right)^4$. This gives us a decomposition $S = S_1 \amalg \dots \amalg S_n \amalg S_0$. We will start by showing that the contribution to $T(S)$ from solutions $a_1x_1 + a_2x_2 + a_3x_3 = 0$, with at least one of the $x_i$ taken from $S_0$, is small.

\begin{lemma}\label{lem:ignore_S0}
The number of solutions to $a_1x_1 + a_2x_2 + a_3x_3 = 0$ in $S$, where some $x_i$ is taken from $S_0$, is no greater than $\frac{\eps}{2}|S|^2$.
\end{lemma}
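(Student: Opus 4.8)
The plan is to split the count according to which of the three variables lands in $S_0$. Any solution with some $x_i \in S_0$ is counted by at least one of $T(S_0, S, S)$, $T(S, S_0, S)$ or $T(S, S, S_0)$, so the number of such solutions is at most their sum (the over-counting of solutions with several $x_i \in S_0$ only helps for an upper bound). It therefore suffices to show that each of these three quantities is at most $\frac{\eps}{6}|S|^2$.

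For each term I would apply Lemma \ref{lem:easy_TNRG_lemma}, taking care to pair the two variables so that $S_0$ sits \emph{inside} the additive energy rather than appearing as the lone size factor $|S_3|$. The proof of that lemma works verbatim for any of the three ways of solving for one variable in terms of the other two, so that for any permutation $(i,j,k)$ of $(1,2,3)$ one has $T(S_1, S_2, S_3)^2 \leq E(a_i \cdot S_i, a_j \cdot S_j)|S_k|$. Applying the appropriate version in each case bounds all three terms by an expression of the shape $E(a_i \cdot S, a_j \cdot S_0)\,|S|$ (for the first term directly $E(a_1 \cdot S_0, a_2 \cdot S)\,|S|$), with $S_0$ always one of the two energy arguments.

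The remaining task is to control $E(a_i \cdot S, a_j \cdot S_0)$ using the noise hypothesis $E(S_0, S) \leq \eps_1 |S|^3$, which is phrased \emph{without} dilations. For this I would first strip the dilations off via the energy Cauchy--Schwarz inequality (Lemma \ref{lem:energy_cs}) together with the obvious dilation-invariance $E(a \cdot X, a \cdot X) = E(X, X)$ (valid since each $a_i \neq 0$):
\[
E(a_i \cdot S, a_j \cdot S_0) \leq E(a_i \cdot S, a_i \cdot S)^{1/2}\, E(a_j \cdot S_0, a_j \cdot S_0)^{1/2} = E(S, S)^{1/2}\, E(S_0, S_0)^{1/2}.
\]
Now $E(S,S) \leq |S|^3$ by the trivial bound (\ref{eqn:bound_energy}), while $E(S_0, S_0) \leq E(S_0, S) \leq \eps_1 |S|^3$, the first inequality because $S_0 \subseteq S$ enlarges the second argument (every quadruple counted by $E(S_0, S_0)$ is counted by $E(S_0, S)$). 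Hence $E(a_i \cdot S, a_j \cdot S_0) \leq \eps_1^{1/2}|S|^3$, and each of the three terms satisfies $T(\cdots)^2 \leq \eps_1^{1/2}|S|^3 \cdot |S| = \eps_1^{1/2}|S|^4$. Taking square roots and recalling $\eps_1 = (\eps/6)^4$ gives $T(\cdots) \leq \eps_1^{1/4}|S|^2 = \frac{\eps}{6}|S|^2$, and summing the three contributions yields the claimed $\frac{\eps}{2}|S|^2$.

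The only genuinely delicate point is the interaction between the dilations $a_i$ and the additive energy: the structure theorem supplies smallness of the \emph{undilated} energy $E(S_0, S)$, whereas Lemma \ref{lem:easy_TNRG_lemma} naturally outputs dilated energies such as $E(a_i \cdot S, a_j \cdot S_0)$. The Cauchy--Schwarz-plus-dilation-invariance step is exactly what bridges this gap, at the modest cost of replacing the mixed energy by the geometric mean of the two self-energies; I expect every other step to be routine.
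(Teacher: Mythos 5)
Your proof is correct and follows essentially the same route as the paper: decompose into $T(S_0,S,S)+T(S,S_0,S)+T(S,S,S_0)$, apply the (suitably permuted) Lemma \ref{lem:easy_TNRG_lemma} so that $S_0$ lands inside the energy, then use Lemma \ref{lem:energy_cs} with dilation-invariance and $E(S_0,S_0)\leq E(S_0,S)\leq\eps_1|S|^3$ to get $\eps_1^{1/4}|S|^2=\frac{\eps}{6}|S|^2$ per term. Your explicit remarks about permuting the roles of the variables and about stripping the dilations are points the paper glosses over, but they are exactly the right justifications.
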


\begin{proof}
The number of such solutions may be upper bounded by 
\[T(S_0, S, S) + T(S, S_0, S) + T(S, S, S_0),\]
and so it suffices to show that each term is no greater than $\frac{\eps}{6}|S|^2$.

Applying Lemmas \ref{lem:easy_TNRG_lemma} and \ref{lem:energy_cs}, we have
\begin{align*}
T(S_0, S, S)^2 &\leq E(a_1 \cdot S_0, a_2 \cdot S) |S| \\
&\leq E(S_0, S_0)^{1/2} E(S, S)^{1/2} |S| \\
&\leq \eps_1^{1/2} |S|^4,
\end{align*}
from which it follows that $T(S_0, S, S) \leq \eps_1^{1/4}|S|^2$. By our choice of $\eps_1$, this gives exactly what we claimed.
\end{proof}

At this point, we must bound the number of solutions to $a_1x_1 + a_2x_2 + a_3x_3 = 0$ where each of $x_1, x_2 \nd x_3$ is taken from an $S_i$ with $i \geq 1$. To do this, we will start by restricting which triples $(i, j, k)$ can have the property that there are many solutions with $x_1 \in S_i, x_2 \in S_j \nd x_3 \in S_k$. For instance, the fact that $\delta[S_1]$ is small, together with an assumption that $a_1$ and $a_2$ are coprime and $|a_1 + a_2|$ is large, will imply that there cannot be too many solutions with $x_1, x_2 \nd x_3$ all in $S_1$. 

In particular, this will give us a fairly rigid structure on the collection of triples $S_i, S_j, S_k$ such that $T(S_i, S_j, S_k)$ can give a non-trivial contribution to $T(S, S, S)$. In order to quantify this structure, we will draw a labelled digraph $G$ whose vertices correspond to the $S_i$ with $i \geq 1$. We will draw an edge from $S_i$ to $S_j$ with label $\frac{a_1}{a_2}$ if and only if $T(S_i, S_j, S) \geq \frac{\eps}{24 K_1^2} |S|^2$, where $K_1$ is as in the statement of Theorem \ref{thm:green_sisask_structure}. Similarly, we will draw an edge with label $\frac{a_3}{a_2}$ if $T(S, S_j, S_i) \geq \frac{\eps}{24 K_1^2} |S|^2$, and similarly for the other four possible labels.

In particular, observe that if there is an edge from $S_i$ to $S_j$ with label $x$, then there will be an edge from $S_j$ to $S_i$ with label $x^{-1}$. Our definition of $G$ does not necessarily preclude the existence of multiple edges between $S_i$ and $S_j$ (with different labels), or edges from $S_i$ to $S_i$. However, as part of the proof, we will show that this cannot happen, provided that we assume a suitable hypothesis on $a_1, a_2 \nd a_3$.

First, we will show that $G$ captures almost all of the solutions to $a_1x_1 + a_2x_2 + a_3x_3 = 0$.
\begin{lemma}\label{lem:ignore_bad_triples}
Say that a triple $S_i, S_j \nd S_k$ is \emph{good} if and only if the six relevant edges are present. For example, $S_i \rightarrow S_j$ has label $\frac{a_1}{a_2}$, $S_k \rightarrow S_j$ has label $\frac{a_3}{a_2}$, and so on. Say that a triple is \emph{bad} otherwise.

Then, the total number of solutions to $a_1x_1 + a_2x_2 + a_3x_3 = 0$ among all of the bad triples is at most $\frac{\eps}{4} |S|^2$.
\end{lemma}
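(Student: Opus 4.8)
The plan is to bound the total number of solutions coming from bad triples by grouping them according to \emph{which} of the three pairwise conditions fails, and then to exploit the disjointness of the pieces $S_k$ to avoid losing a factor of $n$ in the count.

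First I would unwind the definition of a good triple. A triple $(S_i, S_j, S_k)$ (with $x_1 \in S_i, x_2 \in S_j, x_3 \in S_k$) is good precisely when the three pairwise conditions
\[
T(S_i, S_j, S) \geq \tfrac{\eps}{24 K_1^2}|S|^2, \quad T(S_i, S, S_k) \geq \tfrac{\eps}{24 K_1^2}|S|^2, \quad T(S, S_j, S_k) \geq \tfrac{\eps}{24 K_1^2}|S|^2
\]
all hold, since each of these inequalities is exactly what inserts the two directed edges (one in each direction) associated to that pair of coordinates. Hence a bad triple is one for which at least one of these three inequalities fails.

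The crucial step is then a disjointness argument. Fix a pair of indices $(i,j)$ for which $T(S_i, S_j, S) < \tfrac{\eps}{24 K_1^2}|S|^2$. Because $S_1, \dots, S_n$ are disjoint subsets of $S$, summing the contribution over the free third coordinate gives
\[
\sum_{k=1}^n T(S_i, S_j, S_k) = T\Bigl(S_i, S_j, \bigcup_{k} S_k\Bigr) \leq T(S_i, S_j, S) < \tfrac{\eps}{24 K_1^2}|S|^2.
\]
This is what makes the argument work: all triples sharing a fixed failing pair contribute, in total, less than a single threshold's worth of solutions, rather than one threshold per triple. The same estimate applies to the pairs $(i,k)$ and $(j,k)$, summing over the free coordinate $j$ or $i$ respectively.

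Finally I would count. There are at most $n^2 \leq K_1^2$ pairs of indices, so the total contribution from all triples failing the first condition is at most $K_1^2 \cdot \tfrac{\eps}{24 K_1^2}|S|^2 = \tfrac{\eps}{24}|S|^2$, and symmetrically for the other two conditions. Since every bad triple fails at least one of the three, a union bound yields a total of at most $3 \cdot \tfrac{\eps}{24}|S|^2 = \tfrac{\eps}{8}|S|^2 \leq \tfrac{\eps}{4}|S|^2$, as required. I expect the only delicate point to be the bookkeeping in the first step — correctly matching each of the six directed edges to one of the three pairwise $T$-conditions, and making sure the disjointness sum is taken over the coordinate that is genuinely free for each type of failing pair.
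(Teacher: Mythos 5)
Your proof is correct and is essentially the paper's argument: a union bound over the failing edge-conditions, bounding the contribution of each by the threshold $\frac{\eps}{24K_1^2}|S|^2$ times the at most $K_1^2$ pairs of indices, with the disjointness of the $S_k$ (implicitly used in the paper by summing $T(S_i,S_j,S)$ over the offending pairs) handling the free coordinate. The only difference is cosmetic: you group the six edges into three reciprocal pairs governed by the same inequality and so get $\frac{\eps}{8}|S|^2$, whereas the paper treats the six edge types separately and gets $\frac{\eps}{4}|S|^2$; both suffice.
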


\begin{proof}
There are six ways a triple $(S_i, S_j, S_k)$ can be bad. One such way is if there is no edge from $S_i$ to $S_j$ with label $\frac{a_1}{a_2}$.

Let us count the total number of solutions among triples for which the $\frac{a_1}{a_2}$ edge is missing. That is
\begin{align*}
\sum_{\text{such } S_i, S_j} T(S_i, S_j, S) &\leq K_1^2 \frac{\eps}{24 K_1^2} |S|^2 \\
&= \frac{\eps}{24} |S|^2,
\end{align*}
since the number of pairs $S_i, S_j$ is bounded by $K_1^2$.

Summing this over the six possible ways for a triple to be bad completes the proof of Lemma \ref{lem:ignore_bad_triples}.
\end{proof}

In view of Lemmas \ref{lem:ignore_S0} and \ref{lem:ignore_bad_triples}, it remains to show that the number of solutions among the good triples is at most $\left(\frac{1}{12} + \frac{\eps}{4}\right)|S|^2$, for a suitable choice of the coefficients $a_1, a_2 \nd a_3$. The values we will choose are $a_1 = 1, a_2 = M \nd a_3 = M+1$, where
\begin{equation}\label{eqn:condition_on_M}
M > \left(\frac{1000 K_1^4K_2^2}{\eps^2}\right)^{K_1}.
\end{equation}

We can now prove the following lemma:
\begin{lemma}\label{lem:no_bad_cycles}
With the values of $a_1, a_2 \nd a_3$ that we have chosen, the product of the labels along any cycle in $G$ must be 1.
\end{lemma}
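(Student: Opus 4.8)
The plan is to understand what an edge in $G$ with a given label encodes, and then show that labels are forced to multiply to $1$ around any cycle. Recall that an edge carries one of the six labels $\frac{a_i}{a_j}$ (for $i \neq j$), and that an edge $S_i \to S_j$ with label $\frac{a_1}{a_2}$ means $T(S_i, S_j, S) \geq \frac{\eps}{24K_1^2}|S|^2$. By Lemma \ref{lem:easy_TNRG_lemma}, a large value of $T(S_i, S_j, S)$ forces the additive energy $E(a_1 \cdot S_i, a_2 \cdot S_j)$ to be large, of order $|S|^3$; since each $S_i$ has $|S_i| \geq |S|/K_1$ and $\delta[S_i] \leq K_2$, this energy is $\gg_{\eps, K_1, K_2} |a_1 \cdot S_i|^{3/2}|a_2 \cdot S_j|^{3/2}$. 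The key idea is that Lemma \ref{lem:gs_doubling_lem} then converts an edge into a \emph{containment with bounded index}: an edge with label $\frac{a_1}{a_2}$ from $S_i$ to $S_j$ forces $|a_1 \cdot S_i - a_2 \cdot S_j|$ to be small relative to $|S_i|^{1/2}|S_j|^{1/2}$, so that $a_1 \cdot S_i$ and $a_2 \cdot S_j$ occupy essentially the same arithmetic location up to a controlled factor. Intuitively, the label $\frac{a_1}{a_2}$ records the ratio $\frac{a_2}{a_1}$ by which one must dilate $S_i$ to land it (approximately) on top of $S_j$.

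First I would make this precise by defining, for each vertex $S_i$, a notion of its approximate "scale" or location, and showing that an edge with label $\lambda$ relates the scales of its endpoints by the factor $\lambda$. Concretely, traversing a cycle and composing the corresponding approximate dilations returns each set to itself, so the product of the dilation factors — i.e.\ the product of the labels — must be trivial. The mechanism forcing triviality is a \emph{covering/doubling} obstruction: if the product of labels around a cycle were some rational $\frac{p}{q} \neq 1$ (in lowest terms), then composing the edge-relations would yield an inclusion of the form $|p \cdot A - q \cdot A| \ll C|A|$ for one of the component sets $A = S_i$, with $C$ depending only on $\eps, K_1, K_2$. But Theorem \ref{thm:bukh_sod_thm} (Bukh) gives the lower bound $|p \cdot A - q \cdot A| \geq (|p| + |q|)|A| - o(|A|)$, and since a nontrivial product of the six basic labels $\frac{a_i}{a_j}$ built from $a_1 = 1, a_2 = M, a_3 = M+1$ necessarily has very large numerator or denominator once reduced, the quantity $|p| + |q|$ will dwarf the bounded doubling constant $C$. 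This is exactly the role of the enormous lower bound (\ref{eqn:condition_on_M}) on $M$: the constant $C$ accumulates at most a factor per edge of the cycle (hence at most $K_1$ edges, giving the exponent $K_1$), whereas $|p|+|q|$ is polynomial in $M$, so the two bounds are incompatible unless $\frac{p}{q} = 1$.

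The main steps, in order, are: (i) translate each edge into a large-energy statement via Lemma \ref{lem:easy_TNRG_lemma}; (ii) apply Lemma \ref{lem:gs_doubling_lem} to turn large energy into a small difference set $|a_i \cdot S_r - a_j \cdot S_s|$, using properties (1) and (2) of the decomposition to control $\delta$ and the sizes; (iii) chain these difference-set bounds around a cycle using the Ruzsa triangle inequality (Lemma \ref{lem:ruzsa_triangle}) to telescope, obtaining a single bound of the form $|p \cdot S_r - q \cdot S_r| \leq C|S_r|$ where $\frac{p}{q}$ is the reduced product of labels and $C \leq (\text{stuff}/\eps)^{O(K_1)}$; and (iv) compare against Bukh's theorem to force $p = q = 1$, i.e.\ the product of labels is $1$.

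I expect the main obstacle to be step (iii): carefully composing the approximate-dilation relations around the cycle while keeping the accumulated doubling constant under control, and in particular verifying that each application of the Ruzsa triangle inequality (and each clearing of denominators to pass from $a_i \cdot S_r \approx a_j \cdot S_s$ to an integer-scaled statement) multiplies the constant by only a bounded amount. One must track how the coprimality and sizes of the intermediate coefficients interact, and confirm that after reducing the cyclic product to lowest terms $\frac{p}{q}$, at least one of $|p|, |q|$ is forced to be a genuinely large power of $M$ — large enough to beat the final constant $C$ — which is precisely what the threshold (\ref{eqn:condition_on_M}) is engineered to guarantee.
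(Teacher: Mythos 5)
Your proposal follows essentially the same route as the paper: each edge is converted to a large-energy statement via Lemma \ref{lem:easy_TNRG_lemma}, then to a small difference set via Lemma \ref{lem:gs_doubling_lem} (using the size and doubling bounds from Theorem \ref{thm:green_sisask_structure}), these are chained around a shortest offending cycle (of length at most $K_1$) by the Ruzsa triangle inequality, and the resulting bound $|r \cdot S_1 - s \cdot S_1| \leq C|S_1|$ is played off against Theorem \ref{thm:bukh_sod_thm}, with the threshold (\ref{eqn:condition_on_M}) guaranteeing $|r|+|s|$ beats the accumulated constant. The steps you flag as potential obstacles are exactly the ones the paper carries out, and they go through as you anticipate.
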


\begin{remark*}
This immediately tells us that $G$ has no loops (edges from a vertex to itself). In view of the fact that an edge from $S_i$ to $S_j$ with label $x$ is accompanied by an edge from $S_j$ to $S_i$ with label $x^{-1}$, this also tells us that there can be at most one edge from $S_i$ to $S_j$.
\end{remark*}

\begin{remark*}
We have chosen particular values of the $a_i$ for simplicity; indeed, we only need a single choice of coefficients to work in order to establish Theorem \ref{thm:main_theorem}. However, the same argument is able to establish Lemma \ref{lem:no_bad_cycles}, and thus also Theorem \ref{thm:main_theorem}, for a much wider class of equations. For example, whenever $a_1, a_2 \nd a_3$ are coprime, and at least two of the three coefficients are large enough, then the analogue of Lemma \ref{lem:no_bad_cycles} holds, and thus $\gamma_{a_1, a_2, a_3} < 1/12 + \eps$.

Conversely, it does not suffice for just one of the $a_i$ to be large. For example, if $a_1 = a_2 = 1$, then it can be shown that, for $S$ a slightly modified version of the set in Proposition \ref{prop:twelfth}, $T_{1, 1, a_3}(S) > \frac{1}{5} |S|^2$ for any $a_3$.
\end{remark*}

\begin{proof}[Proof of Lemma \ref{lem:no_bad_cycles}]
Suppose there is a cycle whose label product is not 1; consider a shortest such cycle. By minimality, such a cycle may have no repeated vertices, and thus must have at most $K_1$ vertices. Thus, without loss of generality the cycle is $S_1, S_2, \dots, S_k, S_1$, where $S_i \rightarrow S_{i+1}$ has label $t_i$ (with $S_{k+1} = S_1$), and $k \leq K_1$.

By Lemma \ref{lem:easy_TNRG_lemma}, we deduce that for each $i$, 
\[
E(t_i \cdot S_i, S_{i+1}) \geq \frac{\eps^2}{576 K_1^4}|S|^3.
\]

Now, let us apply Lemma \ref{lem:gs_doubling_lem} to $S_i \nd S_{i+1}$. We have that
\[
E(t_i \cdot S_i, S_{i+1}) \geq \frac{\eps^2}{576 K_1^4} |S_i|^{3/2}|S_{i+1}|^{3/2}, 
\]
and so we deduce that
\begin{align}
|t_i \cdot S_i - S_{i+1}| &\leq \delta[S_i] \delta[S_{i+1}] |S_i|^{1/2}|S_{i+1}|^{1/2} \frac{576K_1^4}{\eps^2} \nonumber\\
&\leq \frac{576 K_1^4K_2^2}{\eps^2}|S_i|^{1/2}|S_{i+1}|^{1/2}.
\end{align}

Now, we can prove, by inductively applying Lemma \ref{lem:ruzsa_triangle}, that
\begin{equation}
|t_1t_2 \dots t_i \cdot S_1 - S_{i+1}| \leq \left(\frac{576 K_1^4K_2^2}{\eps^2}\right)^i |S_1|^{1/2}|S_{i+1}|^{1/2}.
\end{equation}
Thus, setting $i=k$, we learn that
\begin{equation}
|t_1t_2 \dots t_k \cdot S_1 - S_1| \leq \left(\frac{576 K_1^4K_2^2}{\eps^2}\right)^{K_1} |S_1|,
\end{equation}
since $k \leq K_1$. 

By hypothesis, $t_1t_2 \dots t_k \neq 1$. However, we know that $t_1t_2\dots t_k$ can be written in the form $M^{e_1}(M+1)^{e_2}$ for some integers $e_i$ not both zero. Suppose that $e_1$ is nonzero; the argument is similar if $e_2$ is nonzero.

Write $t_1t_2 \dots t_k = \frac{r}{s}$ for coprime integers $r \nd s$; our hypothesis tells us that $M$ must divide $r$ or $s$. Therefore, 
\[
|r| + |s| > \left(\frac{1000 K_1^4K_2^2}{\eps^2}\right)^{K_1},
\]
as a consequence of (\ref{eqn:condition_on_M}).

Thus, we have shown that $|r \cdot S_1 - s \cdot S_1| \leq \left(\frac{576 K_1^4K_2^2}{\eps^2}\right)^{K_1} |S_1|$. But, if $S_1$ is sufficiently large, this contradicts Theorem \ref{thm:bukh_sod_thm}, which states that
\[
|r \cdot S_1 - s \cdot S_1| > \left(\frac{1000 K_1^4K_2^2}{\eps^2}\right)^{K_1} |S_1|,
\]
whenever $|S_1|$ is sufficiently large.

This contradiction completes the proof of Lemma \ref{lem:no_bad_cycles}.
\end{proof}

To complete the proof of Theorem \ref{thm:main_theorem}, we just need to bound the number of solutions to $a_1x_1 + a_2x_2 + a_3x_3 = 0$, with $x_1, x_2, x_3$ taken from a good triple. The following lemma will achieve this.
\begin{lemma}\label{lem:bounding_good_solutions}
Suppose we choose $a_1 = 1, a_2 = M \nd a_3 = M+1$, as in the statement of Lemma \ref{lem:no_bad_cycles}.

Then the number of solutions to $a_1x_1 + a_2x_2 + a_3x_3 = 0$ taken from good triples is bounded above by $\left(\frac{1}{12} + \frac{\eps}{4}\right)|S|^2$, whenever $|S|$ is large enough.
\end{lemma}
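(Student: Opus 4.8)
The plan is to distill the labelled-graph data of $G$ into a single \emph{scale} attached to each part, to observe that a good triple forces a rigid linear relation among its three scales, and then to extract the constant $\tfrac{1}{12}$ from a three-colouring of the lattice of scales. First I would use Lemma \ref{lem:no_bad_cycles} to define, on each connected component of $G$, a function $\lambda$ sending every vertex $S_\ell$ to a scale $\lambda_\ell$: fix a root in the component, declare its scale to be $1$, and multiply the edge-labels along a path to $S_\ell$. Because the label-product around any cycle is $1$, this is independent of the chosen path, and every edge $S_i \to S_j$ of label $t$ then satisfies $\lambda_j = t\lambda_i$. Since each label lies in $\{M^{\pm1}, (M+1)^{\pm1}, (M^{-1}(M+1))^{\pm1}\}$, each scale has the form $\lambda_\ell = M^{u_\ell}(M+1)^{v_\ell}$, and as $M$ and $M+1$ are multiplicatively independent the assignment $\lambda_\ell \mapsto (u_\ell, v_\ell) \in \Z^2$ is injective.

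Next I would record the structural consequence: if $(S_i, S_j, S_k)$ is a good triple then $a_1\lambda_i = a_2\lambda_j = a_3\lambda_k$, since the edge $S_i \to S_j$ of label $\tfrac{a_1}{a_2}$ gives $a_1\lambda_i = a_2\lambda_j$ and the edge $S_k \to S_j$ of label $\tfrac{a_3}{a_2}$ gives $a_3\lambda_k = a_2\lambda_j$; in particular a good triple lives in a single component. Grouping parts by scale, write $S^{(\nu)} = \bigcup_{\lambda_\ell = \nu} S_\ell$ and $m_\nu = |S^{(\nu)}|$, so that $\sum_\nu m_\nu = |S| - |S_0|$ by disjointness. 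The good triples of a fixed type $\mu := a_1\lambda_i = a_2\lambda_j = a_3\lambda_k$ are contained in $\{(i,j,k) : \lambda_i = \mu/a_1,\ \lambda_j = \mu/a_2,\ \lambda_k = \mu/a_3\}$, so summing over all good triples and applying Lemma \ref{lem:uniform_bound_symmetric} to each $T(S^{(\mu/a_1)}, S^{(\mu/a_2)}, S^{(\mu/a_3)})$ yields
\[
\sum_{\text{good}} T(S_i,S_j,S_k) \leq \tfrac14\big(\Sigma_{12} + \Sigma_{23} + \Sigma_{13}\big) + O(K_1),
\]
where $\Sigma_{pq} = \sum_\mu m_{\mu/a_p}\, m_{\mu/a_q}$ and the $O(K_1)$ collects the $+1$ terms, there being at most $\#\{\text{scales}\} \leq n \leq K_1$ contributing types.

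The heart of the matter is to bound $\Sigma_{12} + \Sigma_{23} + \Sigma_{13}$. Reindexing by lattice coordinates, $\Sigma_{12}$ pairs $(u,v)$ with $(u-1,v)$, $\Sigma_{13}$ pairs $(u,v)$ with $(u,v-1)$, and $\Sigma_{23}$ pairs $(u,v)$ with $(u+1,v-1)$; thus $\Sigma_{12} + \Sigma_{23} + \Sigma_{13} = \sum_{\{p,q\}} m_p m_q$ over the edges of the triangular lattice on $\Z^2$ with edge-directions $(1,0)$, $(0,1)$, $(1,-1)$. I would then colour $\Z^2$ by $c(u,v) = (u-v) \bmod 3$; each of the three directions changes $c$ by a nonzero residue, so the lattice is properly $3$-coloured and every edge joins two distinct colour classes. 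Writing $w_0, w_1, w_2$ for the total mass in each colour class of a fixed component (so $w_0 + w_1 + w_2 = N_{\mathcal C}$), this gives
\[
\Sigma_{12} + \Sigma_{23} + \Sigma_{13} \leq w_0 w_1 + w_1 w_2 + w_0 w_2 \leq \tfrac13 (w_0 + w_1 + w_2)^2 = \tfrac13 N_{\mathcal C}^2,
\]
the middle step being the elementary inequality $w_0w_1 + w_1w_2 + w_0w_2 \le \tfrac13(w_0+w_1+w_2)^2$. Summing the resulting bound $\tfrac{1}{12}N_{\mathcal C}^2$ over components and using $\sum_{\mathcal C} N_{\mathcal C}^2 \leq \big(\sum_{\mathcal C} N_{\mathcal C}\big)^2 \leq |S|^2$ produces $\tfrac{1}{12}|S|^2$, while the $O(K_1)$ error drops below $\tfrac{\eps}{4}|S|^2$ once $|S|$ is large.

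The decisive idea, and the main obstacle to spot, is that the three coefficient-ratios $\tfrac{a_2}{a_1} = M$, $\tfrac{a_3}{a_1} = M+1$ and $\tfrac{a_3}{a_2} = \tfrac{M+1}{M}$ generate precisely the three edge-directions of a triangular lattice, whose chromatic number is $3$ (equivalently, clique number $3$, so this is exactly the Motzkin--Straus bound). It is this ``$3$'' that yields the factor $\tfrac13$, hence $\tfrac{1}{12} = \tfrac14 \cdot \tfrac13$ matching Proposition \ref{prop:twelfth}; the extremal triangle $\{(0,0),(1,0),(0,1)\}$ corresponds exactly to the three-piece construction there. The remaining work is bookkeeping: verifying that $\lambda$ is well defined through Lemma \ref{lem:no_bad_cycles}, that good-triple solutions are counted once when grouped by type, and that the crude symmetric estimate of Lemma \ref{lem:uniform_bound_symmetric} is lossless in the regime where all three sets of a good triple sit at the same scale.
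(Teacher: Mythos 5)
Your proposal is correct and follows essentially the same route as the paper: the scale function $\lambda$ is the paper's depth function $d$, the grouping $S^{(\nu)}$ is the paper's $R_d$, and your colouring $c(u,v)=(u-v)\bmod 3$ of the lattice of scales is precisely the paper's partition into $X_0, X_1, X_2$ by $e_1 - e_2 \bmod 3$, followed by the same $\tfrac14\cdot\tfrac13$ computation. The triangular-lattice/Motzkin--Straus framing is a pleasant way to see where the $3$ comes from, but the underlying argument is identical.
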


\begin{proof}
We will start by defining a function 
\begin{equation}\label{eqn:defn_of_depth}
d : [n] \rightarrow \Q,
\end{equation}
with the property that if $S_i \rightarrow S_j$ has label $t$, then $d(j) = td(i)$.

One way we can do this is as follows. For each connected component $G'$ of $G$, choose the smallest value of $i$ such that $S_i$ is in $G'$, and set $d(i) = 1$. Then, for any other $j$ with $S_j$ in $G'$, $d(j)$ is determined by the product of the labels on any path from $S_i$ to $S_j$. Lemma \ref{lem:no_bad_cycles} guarantees that this value does not depend on the path chosen.

Now, for each $d$, let $R_d = \cup_{i : d(i) = d} S_i$. Suppose that $S_i, S_j, S_k$ is a good triple, in that order (so, for example, the label on $S_i \rightarrow S_j$ is $a_2/a_1$). Then, setting $d$ = $a_1d(i)$, we have that $d(i) = d/a_1, d(j) = d/a_2 \nd d(k) = d/a_3$.

Therefore, all of the solutions coming from the good triple $S_i, S_j, S_k$ will be counted in $T(R_{d/a_1}, R_{d/a_2}, R_{d/a_3})$, and so an upper bound for the total number of solutions coming from good triples is
\[
\sum_{d} T(R_{d/a_1}, R_{d/a_2}, R_{d/a_3}),
\]
where the sum is taken over all $d$ such that all three of the $R_i$ exist (in particular, there can be no more than $n$ terms in the sum).

We may apply Lemma \ref{lem:uniform_bound_symmetric} to give an upper bound for this.
\begin{align}
\sum_{d} T(R_{d/a_1}, R_{d/a_2}, R_{d/a_3}) &\leq \frac{1}{4}\sum_d |R_{d/a_1}| |R_{d/a_2}| + |R_{d/a_1}| |R_{d/a_3}| + |R_{d/a_2}| |R_{d/a_3}| + 1 \nonumber\\
&\leq \frac{1}{4}\sum_{d_1 \sim d_2} |R_{d_1}||R_{d_2}| + K_1, \label{eqn:sum_of_RR}
\end{align}
where the sum on the second line is over \emph{unordered} pairs $d_1, d_2$ such that $d_1/d_2$ is equal to the ratio between two of the $a_i$. The second inequality follows because if $d_1 \sim d_2$, then there is exactly one ratio $a_i/a_j$ such that $d_1/d_2 = a_i/a_j$. Thus, the term  $|R_{d_1}||R_{d_2}|$ appears in at most one of the sums on the right hand side of the first line.

Finally, for $i = 0, 1 \nd 2$, define the quantity $X_i$ by
\[
X_i = \sum_{\substack{d = M^{e_1}(M+1)^{e_2} \\ e_1 - e_2 = i \mod 3}} |R_d|.
\]
By our construction of $d$, each $|R_d|$ appears as a term in exactly one of the $X_i$. Furthermore, $d_1 \sim d_2$ only if $R_{d_1}$ and $R_{d_2}$ are in different sums $X_i$, and any term $|R_{d_1}||R_{d_2}|$ appears at most once in (\ref{eqn:sum_of_RR}). Consequently, we have the upper bound
\begin{align*}
\frac{1}{4}\sum_{d_1 \sim d_2} |R_{d_1}||R_{d_2}| &\leq \frac{1}{4}(X_0X_1 + X_0X_2 + X_1X_2) \\
&\leq \left(\frac{1}{12} + \frac{\eps}{4}\right)|S|^2,
\end{align*}
the latter inequality following from an easy application of Cauchy-Schwarz, since $X_0 + X_1 + X_2 \leq |S|$. This completes the proof of Lemma \ref{lem:bounding_good_solutions}.
\end{proof}

We have now essentially proven Theorem \ref{thm:main_theorem}. Indeed, any solution to $a_1x_1 + a_2x_2 + a_3x_3 = 0$ must either have some $x_i$ in $S_0$, or must come from a bad triple, or must come from a good triple. Combining Lemmas \ref{lem:ignore_S0}, \ref{lem:ignore_bad_triples} and \ref{lem:bounding_good_solutions} gives the result if $|S|$ is large enough. \qed

\section{Equations in more than 3 variables}\label{sec:generalisation_to_k_var}

A fairly natural extension of Theorem \ref{thm:main_theorem} is to ask if a similar result holds for $k$-variable equations
\begin{equation}\label{eqn:to_solve_k_var}
a_1x_1 + \dots + a_kx_k = 0.
\end{equation}
As before, let $T(S)$ be the number of solutions to (\ref{eqn:to_solve_k_var}) in $S$. Similarly, let $T(S_1, \dots, S_k)$ denote the number of solutions with $x_i$ taken from $S_i$. We have a trivial upper bound for $T(S)$, namely that $T(S) \leq |S|^{k-1}$.

Before presenting our analogous example to Proposition \ref{prop:twelfth}, we require some notation and definitions. Let $I_x : \R \rightarrow \R$ denote the indicator function of a (real) interval of length $x$ centred at the origin, so $I_x(y) = 1$ if and only if $|y| \leq \frac{x}{2}$, and $I_x(y) = 0$ otherwise. 
\begin{definition}\label{def:defn_of_sigma}
For an integer $k \geq 3$, define
\begin{equation}\label{eqn:interval_formula_for_sigma}
\sigma_k = (\underbrace{I_1 \ast \dots \ast I_1}_k)(0).
\end{equation}
\end{definition}

\begin{remark*}
In the introduction, we gave the following formula for $\sigma_k$:
\begin{equation}\tag{\ref{eqn:definition_of_sigma}}
\sigma_k = \int_{-\infty}^{\infty} \left(\frac{\sin \pi x}{\pi x} \right)^k dx.
\end{equation}
The equivalence of these forms follows from taking a Fourier transform and applying the convolution identity; the details can be seen in \cite{BorweinIntegral}.

See also \cite{nathanson}, where it can be shown that $\sigma_{2h}$ is the leading coefficient of the polynomial $\Psi_{h}(n)$.
\end{remark*}

\begin{remark*}
$\sigma_k$ obeys a simple asymptotic (see for example \cite{Polya4Asymp}, or \cite{GoddardAsymptotic} for more terms):
\[
\sigma_k = \sqrt{\frac{6}{k \pi}} \left(1 + O(1/k)\right)
\]
as $k \rightarrow \infty$.

We may interpret $\sigma_k$ combinatorially. If $f_k$ is the probability density function of a sum of $k$ independent random variables distributed uniformly on $[-1/2, 1/2]$, then $\sigma_k = f_k(0)$. Thus, the form of the asymptotic for $\sigma_k$ is not surprising, in view of the Central Limit theorem.
\end{remark*}

\begin{definition}\label{def:definition_of_phi}
For $t_1, \dots, t_k$ positive real numbers, define the function $\Phi = \Phi_k$ by
\begin{equation}\label{eqn:definition_of_phi}
\Phi(t_1, \dots, t_k) = (I_{t_1} \ast \dots \ast I_{t_k})(0).
\end{equation}

In particular, $\sigma_k = \Phi_k(1, \dots, 1)$.
\end{definition}

\begin{remark*}
There is an explicit formula for $\Phi$. In general, we have
\begin{equation}\label{eqn:phi_closed_form_k_var}
\Phi_k(t_1, \dots, t_k) = \frac{1}{(k-1)!2^k}\sum_{\eps \in \{\pm 1\}^{k}} \omega (\eps) (\eps \cdot \mathbf{t})^{k-1} \sgn(\eps \cdot \mathbf{t}),
\end{equation}
where $\omega(\eps) = \prod_i \eps_i$ and $\eps \cdot \mathbf{t} = \sum_i \eps_it_i$ and $\sgn$ denotes the sign function. This is established in \cite{BorweinIntegral}.

For $k = 3$, we can write (for $t_1 \leq t_2 \leq t_3$)
\begin{equation}\label{eqn:phi_closed_form_3_var}
\Phi_3(t_1,t_2,t_3) = \begin{cases}
t_1t_2 & t_3\geq t_1+t_2\\
\dfrac{2(t_1t_2+t_2t_3+t_1t_3) - (t_1^2+t_2^2+t_3^2)}{4} & \text{ otherwise}.
\end{cases}
\end{equation}
\end{remark*}

In analogy with Proposition \ref{prop:twelfth}, we have the following.
\begin{proposition}\label{prop:twelfth_k_var}
Let $k \geq 3$ be an integer. For any equation $a_1x_1 + \dots + a_kx_k = 0,$ there are large sets $S$ for which 
\begin{equation}\label{eqn:size_of_S_k_var}
T(S) \geq \frac{\sigma_k}{k^{k-1}} |S|^{k-1} + O(|S|^{k-2}).
\end{equation}
\end{proposition}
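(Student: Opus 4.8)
The plan is to mimic the construction of Proposition \ref{prop:twelfth}, splitting $S$ into $k$ pieces scaled so that the equation collapses to the balanced equation $x_1' + \dots + x_k' = 0$. Writing $A = a_1 \cdots a_k$ and $b_i = A/a_i = \prod_{j \neq i} a_j$ (an integer, since all $a_j$ are integers), I would set
\[
S_i = b_i \cdot \{-L, \dots, L\}, \qquad S = S_1 \cup \dots \cup S_k,
\]
for a large parameter $L$. Each $S_i$ has at most $2L+1$ elements, so $|S| \leq k(2L+1)$ regardless of whether the $S_i$ overlap. Restricting attention to solutions with $x_i \in S_i$ in this fixed order (which only undercounts $T(S)$, and so is harmless for a lower bound), the substitution $x_i = b_i x_i'$ gives $a_i x_i = A x_i'$, whence $\sum_i a_i x_i = A \sum_i x_i'$. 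As $A \neq 0$, such a tuple is a solution precisely when $x_1' + \dots + x_k' = 0$ with each $x_i' \in \{-L, \dots, L\}$, and since the maps $x_i' \mapsto b_i x_i'$ are injective, distinct tuples $(x_i')$ yield distinct solutions $(x_i)$. Hence
\[
T(S) \geq N(L) := \#\{(x_1', \dots, x_k') \in \{-L, \dots, L\}^k : x_1' + \dots + x_k' = 0\}.
\]

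The heart of the matter is the asymptotic evaluation $N(L) = (2L)^{k-1}\sigma_k + O(L^{k-2})$. I would view $N(L)$ as the number of integer points of the polytope
\[
P = \{(x_1', \dots, x_{k-1}') \in [-L, L]^{k-1} : |x_1' + \dots + x_{k-1}'| \leq L\} \subseteq \R^{k-1},
\]
obtained by solving $x_k' = -(x_1' + \dots + x_{k-1}')$. A standard lattice-point count gives $N(L) = \mathrm{vol}(P) + O(L^{k-2})$, the error being governed by the $(k-2)$-dimensional boundary of $P$, which has measure $O(L^{k-2})$ as $P$ has diameter $O(L)$. It remains to identify $\mathrm{vol}(P)$. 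Integrating the indicator that $x_k' \in [-L, L]$ over the box shows that $\mathrm{vol}(P) = (I_{2L} \ast \dots \ast I_{2L})(0) = \Phi_k(2L, \dots, 2L)$, with $I_{2L}$ the indicator of $[-L, L]$ as in Definition \ref{def:definition_of_phi}. A change of variables in the convolution integral shows that $\Phi_k$ is homogeneous of degree $k-1$, so $\Phi_k(2L, \dots, 2L) = (2L)^{k-1}\Phi_k(1, \dots, 1) = (2L)^{k-1}\sigma_k$.

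Finally I would combine these estimates. From $|S| \leq k(2L+1)$ we get $2L \geq |S|/k - 1$, hence $(2L)^{k-1} \geq \frac{\sigma_k}{k^{k-1}}|S|^{k-1} \cdot \sigma_k^{-1} + O(|S|^{k-2})$; more plainly, $(2L)^{k-1} = \frac{|S|^{k-1}}{k^{k-1}} + O(|S|^{k-2})$ is not forced, but the lower bound $(2L)^{k-1} \geq \frac{|S|^{k-1}}{k^{k-1}} + O(|S|^{k-2})$ is, and that is all we need. Therefore
\[
T(S) \geq N(L) = (2L)^{k-1}\sigma_k + O(L^{k-2}) \geq \frac{\sigma_k}{k^{k-1}}|S|^{k-1} + O(|S|^{k-2}),
\]
which is (\ref{eqn:size_of_S_k_var}); letting $L \to \infty$ produces sets of arbitrarily large size.

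The main obstacle is the estimate $N(L) = \mathrm{vol}(P) + O(L^{k-2})$: one must check that the boundary contribution is genuinely of order $L^{k-2}$ rather than merely $o(L^{k-1})$. The cleanest route is probably to write $(2L)^{-(k-1)}N(L)$ as a Riemann sum of mesh $1/(2L)$ for the integral defining $\Phi_k(1, \dots, 1) = \sigma_k$, and to control the Riemann-sum error using the fact that the piecewise-polynomial integrand $I_1^{\ast (k-1)}$ has bounded variation, so that the error is $O(1/L)$ and the claimed $O(L^{k-2})$ follows after multiplying back through by $(2L)^{k-1}$. Everything else, namely the homogeneity of $\Phi_k$ and the bookkeeping relating $L$ to $|S|$, is routine.
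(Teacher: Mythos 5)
Your proposal is correct and follows essentially the same route as the paper: the same construction $S=\bigcup_i b_i\cdot[-L,L]$ with $b_i=\prod_{j\neq i}a_j$, the same reduction to counting solutions of $x_1'+\dots+x_k'=0$ in a box, and the same identification of that count with $\Phi_k(2L,\dots,2L)=(2L)^{k-1}\sigma_k$ via homogeneity. The only cosmetic difference is that you perform the lattice-point-versus-volume comparison inline (polytope boundary / Riemann-sum error $O(L^{k-2})$), whereas the paper isolates this step as Proposition~\ref{prop:intervals_like_progressions} and proves it by comparing $\sum_i \lfloor y_i\rfloor$ with $\sum_i y_i$; these are the same estimate.
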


The proof of Proposition \ref{prop:twelfth_k_var} will rely on the following fact, which states that, when the coefficients $a_i$ are all 1, long progressions behave somewhat like real intervals.

\begin{proposition}\label{prop:intervals_like_progressions}
Suppose $S_1, \dots, S_k$ are arithmetic progressions centred at the origin, with the same common difference. Let $s_i$ be the number of terms in $S_i$.

Then, the number of solutions to $x_1 + \dots + x_k = 0$ where each $x_i \in S_i$ is
\[
\Phi(s_1, \dots, s_k) + O_k((s_1 + \dots + s_k)^{k-2}).
\]
\end{proposition}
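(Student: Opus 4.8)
The plan is to read both sides as convolutions evaluated at the origin and then to compare the discrete convolution with its continuous model, controlling the discrepancy by a Riemann-sum estimate that gains one power of the scale.

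First I would normalise. Rescaling all the $x_i$ by the reciprocal of the common difference changes neither the number of solutions nor $\Phi(s_1,\dots,s_k)$ (the latter depends only on the $s_i$), so I may assume the common difference is $1$. Then each $S_i$ is a symmetric block of $s_i$ consecutive elements of $\Z$ when $s_i$ is odd and of $\Z+\tfrac12$ when $s_i$ is even; the two cases run identically after a half-integer shift, so I suppress this parity bookkeeping and write as though $S_i\subseteq\Z$. Put $f_i=\1_{S_i}$, let $N_j=f_1\ast\cdots\ast f_j$ be the discrete convolution on $\Z$, so $N_j(m)$ counts representations $x_1+\cdots+x_j=m$ with $x_i\in S_i$, and let $V_j=I_{s_1}\ast\cdots\ast I_{s_j}$ be the continuous convolution on $\R$. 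The quantity to estimate is $N_k(0)$, and the target main term is $V_k(0)=\Phi(s_1,\dots,s_k)$. Write $L_j=s_1+\cdots+s_j$.

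The core is the inductive claim, for $j\geq 2$, that
\[
\sup_{m}\big|N_j(m)-V_j(m)\big|=O_j\big(L_j^{\,j-2}\big)
\]
uniformly in $m$. The base case $j=2$ is direct: $N_2(m)$ counts the elements of the symmetric integer interval $S_1$ lying in the shifted interval $m-S_2$, while $V_2(m)$ is the length of the corresponding real interval, so the two differ by $O(1)=O(L_2^{0})$. (It is precisely the exact agreement $f_1\equiv I_{s_1}$ on the lattice — valid because $I_{s_1}$ jumps only at the non-lattice points $\pm s_1/2$ — that keeps this base error bounded rather than compounding.) For the step $j\to j+1$ I use the recursions $N_{j+1}(m)=\sum_{l\in S_{j+1}}N_j(m-l)$ and $V_{j+1}(m)=\int_{-s_{j+1}/2}^{s_{j+1}/2}V_j(m-y)\,dy$, which follow from associativity of convolution and the symmetry of $I_{s_{j+1}}$. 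Subtracting gives $N_{j+1}(m)-V_{j+1}(m)=A+B$, where $A=\sum_{l\in S_{j+1}}\big(N_j(m-l)-V_j(m-l)\big)$ is the propagated error and $B=\sum_{l\in S_{j+1}}V_j(m-l)-\int_{-s_{j+1}/2}^{s_{j+1}/2}V_j(m-y)\,dy$ is a unit-spacing midpoint Riemann-sum error. The inductive hypothesis bounds $A$ by $s_{j+1}\cdot O(L_j^{j-2})=O(L_{j+1}^{j-1})$. For $B$ the point is that $V_j$ is Lipschitz with a controlled derivative: differentiating $V_j=I_{s_1}\ast(I_{s_2}\ast\cdots\ast I_{s_j})$ and using $I_{s_1}'=\delta_{-s_1/2}-\delta_{s_1/2}$ writes $V_j'$ as a difference of two translates of $I_{s_2}\ast\cdots\ast I_{s_j}$, whence $\|V_j'\|_\infty\leq 2\,\|I_{s_2}\ast\cdots\ast I_{s_j}\|_\infty=O(L_j^{j-2})$. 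Since the $s_{j+1}$ unit cells tile $[-s_{j+1}/2,s_{j+1}/2]$ exactly, the midpoint rule gives $|B|=O(\|V_j'\|_\infty\cdot s_{j+1})=O(L_{j+1}^{j-1})$. Both contributions match the budget $O(L_{j+1}^{(j+1)-2})$, the induction closes, and taking $j=k$, $m=0$ yields the Proposition.

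The main obstacle is securing the gain of exactly one power of the scale — an error of order $L^{k-2}$ rather than the naive $L^{k-1}$ — and threading it through the induction with the correct exponent. This gain rests on two facts that must be made precise: that at the bottom level there is \emph{no} boundary error, because the single indicator $I_{s_1}$ agrees with $f_1$ on the lattice, so the base discrepancy is a clean $O(1)$; and that every higher convolution $V_j$ is smooth enough for its Riemann-sum error to sit one order below its size, which is exactly the content of the derivative bound $\|V_j'\|_\infty=O(L_j^{j-2})$. The remaining technical nuisance is the parity/coset bookkeeping when some $s_i$ are even; it affects none of the orders of magnitude, but must be tracked to ensure the node spacing stays equal to $1$ so that the Riemann-sum estimate for $B$ remains valid.
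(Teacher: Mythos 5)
Your argument is correct and reaches the stated error term, but it is organised quite differently from the paper's. The paper proves the proposition in one shot: it writes $T(S_1,\dots,S_k)$ as a $(k-1)$-dimensional integral of the indicator $\1_{|\lfloor y_1\rfloor+\dots+\lfloor y_{k-1}\rfloor|\leq s_k/2}$ over the box $\prod_{i<k}[-s_i/2,s_i/2]$, and then observes (via the two displayed implications in its proof) that this indicator and $\1_{|y_1+\dots+y_{k-1}|\leq s_k/2}$ can disagree only when $y_1+\dots+y_{k-1}$ lies within $O(k)$ of $\pm s_k/2$, a slab whose intersection with the box has measure $O_k((s_1+\dots+s_k)^{k-2})$; no induction is needed. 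You instead induct on the number of convolution factors, proving the stronger uniform estimate $\sup_m|N_j(m)-V_j(m)|=O_j(L_j^{\,j-2})$ by a midpoint Riemann-sum comparison, with the crucial one-power gain supplied by the derivative bound $\|V_j'\|_\infty\leq 2\|I_{s_2}\ast\cdots\ast I_{s_j}\|_\infty=O(L_j^{\,j-2})$; the exponents close correctly at each step. Your route is longer but buys a sup-norm statement valid at every shift $m$ (the paper needs only $m=0$) and makes explicit where the saving of one power comes from; the paper's route is shorter, replacing the induction by a single volume estimate for a thin slab. Both treatments gesture at, without fully resolving, the parity bookkeeping when some $s_i$ is even (half-integer centres), so that omission is not a point against you.
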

\begin{proof}
We may assume without loss of generality that the progressions $S_i$ have common difference 1. To prove Proposition \ref{prop:intervals_like_progressions}, it suffices to use the following observation.

Suppose that $y_1, \dots, y_{k-1}$ are elements of the real intervals $I_{s_1}, \dots, I_{s_{k-1}}$. Then, we have the following two implications for $k-1$-tuples of real numbers $y_1, \dots, y_{k-1}$.
\begin{itemize}
\item If $|y_1 + \dots + y_{k-1}| \leq \frac{s_k}{2},$ then  $\left|\lfloor y_1\rfloor + \dots + \lfloor y_{k-1}\rfloor \right| \leq \frac{s_k}{2} + k;$

\item If $\left|\lfloor y_1\rfloor + \dots + \lfloor y_{k-1}\rfloor \right| \leq \frac{s_k}{2} - k,$ then $|y_1 + \dots + y_{k-1}| \leq \frac{s_k}{2}.$
\end{itemize}

Now, $T(S_1, \dots, S_k)$ counts the number of $k-1$-tuples of integers $(x_i)_{i=1}^{k-1}$ with $x_i \in S_i$, such that $- \sum_i x_i \in S_k$.

Up to an error which is at most $O_k((s_1 + \dots + s_k)^{k-2})$, this can be written as an integral
\[
\int_{-s_1/2}^{s_1/2} \dots \int_{-s_{k-1}/2}^{s_{k-1}/2} \1_{|\lfloor y_1\rfloor + \dots + \lfloor y_{k-1}\rfloor | \leq s_k/2}dy_1 \dots dy_{k-1}.
\]
The two implications above allow us to show that, up to acceptable error, this is equal to 
\[
\int_{-s_1/2}^{s_1/2} \dots \int_{-s_{k-1}/2}^{s_{k-1}/2} \1_{|y_1+ \dots + y_{k-1} | \leq s_k/2}dy_1 \dots dy_{k-1},
\]
which is equal to $\Phi(s_1, \dots, s_k)$; we omit the details.

\end{proof}

We are now ready to prove Proposition \ref{prop:twelfth_k_var}.
\begin{proof}[Proof of Proposition \ref{prop:twelfth_k_var}]
As in Proposition \ref{prop:twelfth}, we will consider $S$ as the union of $k$ sets $S_1, \dots, S_k$, with the property that $T(S_1, \dots, S_k)$ is large.

The way we will do this is as follows. Let $M$ be a large integer, which we assume to be divisible by $2k$. Define
\[
S_i = \frac{1}{a_i}[-M/2k, M/2k]
\]
for each $i$ with $1 \leq i \leq k$, where we may normalise the sets to consist of integers by multiplying by $\prod_i a_i$. Then, let $S = \cup_i S_i$, so that $|S| \leq M$.

It remains to show that $T(S_1, \dots, S_k) \geq \frac{\sigma_k}{k^{k-1}} M^{k-1} + O(M^{k-2}).$ But this follows as an easy consequence of Proposition \ref{prop:intervals_like_progressions}. Indeed, 
\begin{align*}
T(S_1, \dots, S_k) &= \Phi_k(M/k, \dots, M/k) + O(M^{k-2})\\
&= \left(\frac{M}{k}\right)^{k-1}\Phi_k(1, \dots, 1) + O(M^{k-2}) \\
&= \frac{\sigma_k}{k^{k-1}} M^{k-1} + O(M^{k-2}).
\end{align*}
\end{proof}

Perhaps unsurprisingly, Theorem \ref{thm:main_theorem} also generalises to this setting.
\begin{theorem}\label{thm:main_theorem_k_var}
Let $\eps > 0$. Then, there exist coefficients $a_1, \dots, a_k$ with the property that, for any suitably large set $S$,
\[
T(S) \leq \left( \frac{\sigma_k}{k^{k-1}} + \eps \right)|S|^{k-1} + o(|S|^{k-1}).
\]
\end{theorem}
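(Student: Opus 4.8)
The plan is to transpose the architecture of the proof of Theorem~\ref{thm:main_theorem} to $k$ variables, isolating the two places where genuinely new input is needed. I would fix $\eps>0$ and take the coefficients $a_1,\dots,a_k=p_1,\dots,p_k$ to be $k$ distinct primes, all larger than a threshold depending on $\eps$ and $k$; being distinct primes they are coprime and, crucially, generate a free multiplicative group, which is what drives both the cycle and colouring arguments below. Applying Theorem~\ref{thm:green_sisask_structure} with $\eps_1=(\eps/2k)^4$ gives a decomposition $S=S_1\amalg\dots\amalg S_n\amalg S_0$. The noise term is discarded as in Lemma~\ref{lem:ignore_S0}: generalising Lemma~\ref{lem:easy_TNRG_lemma}, I would pair the $S_0$-variable with one neighbour and bound the remaining $k-2$ variables trivially, so that Cauchy--Schwarz yields $T(S_0,S,\dots,S)^2\le E(a_1\cdot S_0,a_2\cdot S)\,|S|^{2k-5}$; Lemma~\ref{lem:energy_cs} together with $E(S_0,S_0)\le E(S_0,S)\le\eps_1|S|^3$ then gives $T(S_0,S,\dots,S)\le\eps_1^{1/4}|S|^{k-1}$, and summing over the $k$ slots bounds the whole $S_0$-contribution by $\tfrac{\eps}{2}|S|^{k-1}$.

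I would then build the labelled digraph $G$ as before, putting an edge $S_i\to S_j$ with label $a_p/a_q$ whenever the count with $S_i,S_j$ in slots $p,q$ and every other slot equal to $S$ exceeds $\theta|S|^{k-1}$, and calling a $k$-tuple \emph{good} when all of its pairwise edges are present. The argument of Lemma~\ref{lem:ignore_bad_triples} shows that bad tuples contribute at most $\tfrac{\eps}{4}|S|^{k-1}$, and the cycle argument of Lemma~\ref{lem:no_bad_cycles} carries over: a cycle label is a ratio $\prod_i p_i^{e_i}$ with $\sum_i e_i=0$, and if it were nontrivial the Ruzsa iteration of Lemma~\ref{lem:ruzsa_triangle} would contradict Theorem~\ref{thm:bukh_sod_thm} once the $p_i$ exceed the $K_1,K_2$-dependent bound. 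This produces a depth function $d$ with $d(j)=(a_p/a_q)d(i)$ along each edge, so that the solutions from good tuples are counted by $\sum_d T(R_{d/a_1},\dots,R_{d/a_k})$, where $R_{\delta}=\bigcup_{d(i)=\delta}S_i$ and $d$ runs over the finitely many base depths for which all $k$ of the $R_{d/a_p}$ are present. Writing $\chi\colon\Gamma_0\to\Z/k\Z$ for the homomorphism $\prod_i p_i^{e_i}\mapsto\sum_i i\,e_i$ on the depth group $\Gamma_0=\{\prod_i p_i^{e_i}:\sum_i e_i=0\}$, the colours $\chi(d/a_1),\dots,\chi(d/a_k)$ run over all of $\Z/k\Z$, so the $k$ pieces of each good tuple carry distinct colours.

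The first substantial new ingredient is a sharp uniform bound replacing Lemma~\ref{lem:uniform_bound_symmetric}, namely
\[
T(S_1,\dots,S_k)\le \Phi_k(|S_1|,\dots,|S_k|)+o\bigl((|S_1|+\dots+|S_k|)^{k-1}\bigr).
\]
I would prove this by a discrete rearrangement (Riesz / Brascamp--Lieb--Luttinger) inequality, which shows that among sets of prescribed sizes the solution count is maximised when every $S_i$ is an interval centred at $0$, followed by Proposition~\ref{prop:intervals_like_progressions} to evaluate this extremal configuration as $\Phi_k$. For the optimisation I would record the relevant convexity of $\Phi_k$: since $\Phi_k(t_1,\dots,t_k)$ is the volume of the polytope $P(t)=\{y\in\R^{k-1}:|y_p|\le t_p/2,\ |y_1+\dots+y_{k-1}|\le t_k/2\}$ and $P(s)+P(t)\subseteq P(s+t)$, Brunn--Minkowski shows that $\Phi_k^{1/(k-1)}$ is concave and homogeneous of degree one; in particular $\Phi_k$ is symmetric, Schur-concave, and vanishes whenever any argument is $0$.

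The decisive step, and the one I expect to be the main obstacle, is the combinatorial optimisation
\[
\sum_d \Phi_k\bigl(|R_{d/a_1}|,\dots,|R_{d/a_k}|\bigr)\le\Bigl(\tfrac{\sigma_k}{k^{k-1}}+o(1)\Bigr)|S|^{k-1},
\]
subject only to $\sum_{\delta}|R_\delta|\le|S|$. For $k=3$ this is painless because $\Phi_3\le\tfrac14(s_1s_2+s_2s_3+s_1s_3)$ relaxes $T$ to an elementary symmetric polynomial whose sum over base depths counts each distinct-colour product at most once; but this route fails for $k\ge4$, since the analogous inequality $\Phi_k\le\tfrac{\sigma_k}{k}e_{k-1}$ is false (for instance $\Phi_4(\tfrac32,\tfrac32,\tfrac12,\tfrac12)>\tfrac{\sigma_4}{4}e_3(\tfrac32,\tfrac32,\tfrac12,\tfrac12)$), and no bound depending only on $\sum_p t_p$ can work, as it must ignore that $\Phi_k$ vanishes unless all $k$ coordinates are positive. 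Instead I would exploit that the base depths lie in a coset of the free group $\Gamma_0\cong\Z^{k-1}$, so that a base depth is present only when all $k$ translates $d/a_1,\dots,d/a_k$ of a fixed template are occupied; torsion-freeness of $\Gamma_0$ makes full templates sparse (in the extremal configuration there is exactly one), and the target inequality should reduce to a covering-type estimate of the form $\bigl|\bigcap_{p}(D-u_p)\bigr|\lesssim(|D|/k)^{k-1}$ for the set $D$ of occupied depths, combined with the Schur-concavity of $\Phi_k$ to handle unequal piece sizes. Showing that a single aligned template is extremal --- so that the bound matches the construction of Proposition~\ref{prop:twelfth_k_var} exactly --- is where the real work lies. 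Granting it, adding the three contributions (noise, bad tuples, good tuples) gives $T(S)\le\bigl(\tfrac{\sigma_k}{k^{k-1}}+\eps\bigr)|S|^{k-1}+o(|S|^{k-1})$, as required.
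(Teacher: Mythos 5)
Your overall architecture (structure theorem, discarding the noise via the $k$-variable energy bound, the labelled digraph, the cycle argument via Ruzsa--Bukh, the depth function and the mod-$k$ colouring) is exactly the paper's, and those parts are sound. The problem is your ``decisive step'': you leave it unproven, and the reason you give for abandoning the natural route is a miscalculation. You claim that $\Phi_k(s_1,\dots,s_k)\le\frac{\sigma_k}{k}\sum_i\hat{s_i}$ fails for $k\ge 4$, citing $(s_1,\dots,s_4)=(\tfrac32,\tfrac32,\tfrac12,\tfrac12)$. In fact
\[
\Phi_4\bigl(\tfrac32,\tfrac32,\tfrac12,\tfrac12\bigr)=\int_{-1/2}^{1/2}\bigl(\tfrac32-|x|\bigr)\bigl(\tfrac12-|x|\bigr)\,dx=\tfrac13,
\qquad
\tfrac{\sigma_4}{4}\sum_i\hat{s_i}=\tfrac{2/3}{4}\cdot\bigl(\tfrac38+\tfrac38+\tfrac98+\tfrac98\bigr)=\tfrac12,
\]
so the inequality holds there with room to spare. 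It is true for every $k\ge3$: this is precisely Lemma \ref{lem:uniform_bound_symmetric_k_var}, i.e.\ inequality (\ref{eqn:to_prove_about_convolution}), proved by substituting $t_i=s_i^{-1}$ and showing by a smoothing/concavity argument that $t_1\cdots t_k\Phi_k(t_1^{-1},\dots,t_k^{-1})\le\frac{\sigma_k}{k}(t_1+\cdots+t_k)$; for even $k$ (in particular $k=4$) it also follows from H\"older on the Fourier side, $\Phi_k\le\sigma_k(s_1\cdots s_k)^{1-1/k}$, together with AM--GM. Your objection that ``no bound depending only on $\sum_p t_p$ can work'' conflates the bound on $\Phi_k$ itself (which is in terms of $\sum_i\hat{s_i}$, not $\sum_i s_i$) with the transformed inequality for $\Theta$.

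Because of this false obstruction you replace the finishing argument with a covering-type estimate over ``templates'' in $\Gamma_0\cong\Z^{k-1}$, which you do not prove and which you yourself flag as where the real work lies; as written this is a genuine gap. The correct finish is the one you already sketch for $k=3$, and it generalises verbatim: apply $\Phi_k\le\frac{\sigma_k}{k}\sum_i\hat{s_i}$ to each good tuple, note that the $k$ depths $d/a_1,\dots,d/a_k$ receive distinct colours under your homomorphism $\chi$, so each unordered $(k-1)$-fold product $|R_{d_1}|\cdots|R_{d_{k-1}}|$ occurs at most once and the whole sum is dominated by $\sum_i\widehat{X_i}$, and conclude with the elementary inequality $\sum_i\widehat{X_i}\le k^{-(k-2)}\bigl(\sum_i X_i\bigr)^{k-1}$. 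The remaining minor points in your write-up (distinct large primes rather than $a_1=1$ with large coprime $a_i$; the rearrangement route to $T\le\Phi_k+o(\cdot)$, for which the paper invokes Lev's theorem) are harmless variants of what the paper does.
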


The proof of Theorem \ref{thm:main_theorem_k_var} is broadly similar to the proof of Theorem \ref{thm:main_theorem}. There are two main places in which the argument slightly differs. Firstly, we must generalise Lemma \ref{lem:easy_TNRG_lemma} to give a bound for $T(S_1, \dots, S_k)$ in terms of $E(S_1, S_2)$:
\begin{lemma}\label{lem:easy_TNRG_lemma_k_var}
Suppose that $S_1, \dots, S_k \subseteq \Z$ are finite sets. Then
\begin{equation*}
T(S_1, \dots, S_k)^2 \leq E(a_1 \cdot S_1, a_2 \cdot S_2) \left(|S_3||S_4|\dots |S_k|\right)^{2 - 1/(k-2)}.
\end{equation*}
\end{lemma}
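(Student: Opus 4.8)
The plan is to mirror the proof of Lemma \ref{lem:easy_TNRG_lemma}, but to split the $k$ variables into the block $\{1,2\}$ and the block $\{3,\dots,k\}$ rather than a single variable and a pair. First I would introduce two representation functions: let $\mu(t)$ be the number of ways to write $t = a_1x_1 + a_2x_2$ with $x_1 \in S_1, x_2 \in S_2$, and let $\nu(s)$ be the number of ways to write $s = a_3x_3 + \dots + a_kx_k$ with $x_i \in S_i$ for $3 \leq i \leq k$. Since a solution to $a_1x_1 + \dots + a_kx_k = 0$ is precisely a pairing of a representation of some $t$ by the first block with a representation of $-t$ by the second block, we have
\[
T(S_1, \dots, S_k) = \sum_t \mu(t)\nu(-t).
\]
By Cauchy--Schwarz, and using $\sum_t \mu(t)^2 = E(a_1\cdot S_1, a_2\cdot S_2)$ exactly as in Lemma \ref{lem:easy_TNRG_lemma} together with the reindexing $s \mapsto -s$, this yields
\[
T(S_1, \dots, S_k)^2 \leq \Big(\sum_t \mu(t)^2\Big)\Big(\sum_s \nu(s)^2\Big) = E(a_1\cdot S_1, a_2\cdot S_2)\sum_s \nu(s)^2.
\]

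It therefore remains to prove the bound $\sum_s \nu(s)^2 \leq \left(|S_3|\cdots|S_k|\right)^{2 - 1/(k-2)}$, which is the heart of the lemma. The quantity $\sum_s \nu(s)^2$ is a multiway additive energy of the $k-2$ sets $a_3\cdot S_3, \dots, a_k\cdot S_k$, and the key is to control it by combining a total-mass identity with a pointwise bound. The total mass is simply $\sum_s \nu(s) = |S_3|\cdots|S_k|$. For the pointwise bound, fix $s$ and let $i_0$ be an index in $\{3,\dots,k\}$ maximising $|S_i|$; once the values $x_i$ are chosen for all $i \neq i_0$, the equation determines $x_{i_0} = a_{i_0}^{-1}(s - \sum_{i \neq i_0} a_ix_i)$ (here we use that $a_{i_0} \neq 0$), so at most one admissible value of $x_{i_0}$ remains. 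Hence
\[
\nu(s) \leq \prod_{\substack{3 \leq i \leq k \\ i \neq i_0}} |S_i| = \frac{|S_3|\cdots|S_k|}{\max_{3 \leq i \leq k}|S_i|}.
\]
Combining these, $\sum_s \nu(s)^2 \leq \big(\max_s \nu(s)\big)\sum_s \nu(s) \leq \dfrac{(|S_3|\cdots|S_k|)^2}{\max_{3 \leq i \leq k}|S_i|}$, and since the maximum of the sizes is at least their geometric mean, namely $\max_{3 \leq i \leq k}|S_i| \geq \left(|S_3|\cdots|S_k|\right)^{1/(k-2)}$, the claimed bound on $\sum_s \nu(s)^2$ follows at once. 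Substituting back into the Cauchy--Schwarz inequality above completes the proof.

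The only genuinely new ingredient beyond the $k=3$ argument is the estimate for the multiway energy $\sum_s \nu(s)^2$, and this is the step I expect to require the most care. The mild subtlety is that the pointwise bound on $\nu(s)$ is sharpened by freeing the variable attached to the \emph{largest} of the sets $S_3, \dots, S_k$; it is precisely this optimal choice, followed by the AM--GM comparison of the maximum with the geometric mean, that produces the exponent $2 - 1/(k-2)$ rather than a weaker one. As a consistency check, when $k = 3$ the second block is a single set, $\sum_s \nu(s)^2 = |S_3|$, the exponent $2 - 1/(k-2)$ equals $1$, and the statement reduces exactly to Lemma \ref{lem:easy_TNRG_lemma}.
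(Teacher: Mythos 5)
Your proposal is correct and follows essentially the same route as the paper: split the variables into the block $\{1,2\}$ and the block $\{3,\dots,k\}$, apply Cauchy--Schwarz to the two representation functions, and bound the resulting multiway energy $\sum_s \nu(s)^2$ by $(|S_3|\cdots|S_k|)^{2-1/(k-2)}$. The paper dispatches that last bound with a one-line appeal to ``the same argument used to bound the energy'' (i.e.\ the geometric mean of the $k-2$ trivial bounds), whereas you derive it via $\sum_s \nu(s)^2 \leq (\max_s \nu(s))\sum_s \nu(s)$ together with the comparison of the maximum of the $|S_i|$ with their geometric mean --- the same estimate, just spelled out.
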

\begin{proof}
For any $t \in \Z$, let $\mu(t)$ denote the number of ways of writing $t = a_1x_1 + a_2x_2$, for $x_i \in S_i$. Thus, by definition, \[E(a_1\cdot S_1, a_2\cdot S_2) = \sum_t \mu(t)^2.\]
Define $\nu(t)$ to be the number of ways of writing $t = -a_3x_3 - \dots - a_kx_k$, for $x_i \in S_i$. Thus, we see that 
\begin{align*}
T(S_1, \dots, S_k)^2 &= \left(\sum_t \mu(t) \nu(t)\right)^2 \\
&\leq \left(\sum_t \mu(t)^2\right)\left(\sum_t \nu(t)^2\right) \\
&= E(a_1\cdot S_1, a_2\cdot S_2)\left(\sum_t \nu(t)^2\right).
\end{align*}
Finally, we observe that $\sum_t \nu(t)^2$ represents the number of solutions to the equation $$a_3x_3 + \dots + a_kx_k = a_3x_3' + \dots + a_kx_k',$$ and so we can bound it by $(|S_3| \dots |S_k|)^{2 - \frac{1}{k-2}},$ by the same argument used in (\ref{eqn:bound_energy}) to bound the energy.
\end{proof}

Secondly, we will have to apply a $k$ variable analogue of Lemma \ref{lem:uniform_bound_symmetric}. The analogue of this is the following:
\begin{lemma}\label{lem:uniform_bound_symmetric_k_var}
Suppose that $S_1, \dots, S_k \subseteq \Z$ are sets with $|S_i| = s_i$. Then
\[
T(S_1, \dots, S_k) \leq \frac{\sigma_k}{k} \left( \sum_i \hat{s_i} \right) + O\left(\sum_i s_i \right)^{k-2},
\]
where $\hat{s_i} = \prod_{j \neq i} s_j$.
\end{lemma}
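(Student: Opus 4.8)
The plan is to reduce to intervals via rearrangement, and then to prove a sharp inequality for the continuous convolution $\Phi_k$.

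First I would reduce to the case $a_1 = \dots = a_k = 1$, exactly as at the end of the proof of Lemma~\ref{lem:uniform_bound_symmetric}: since $x\mapsto a_ix$ is injective, $T_{a_1,\dots,a_k}(S_1,\dots,S_k) = T_{1,\dots,1}(a_1\cdot S_1,\dots,a_k\cdot S_k)$ and $|a_i\cdot S_i| = s_i$, so neither the left-hand side nor the quantities $\hat{s_i}$ and $\sum_i s_i$ change. With all coefficients equal to $1$, the count $T(S_1,\dots,S_k)$ is precisely $(\1_{S_1}\ast\dots\ast\1_{S_k})(0)$, i.e.\ the number of $k$-tuples $(x_1,\dots,x_k)\in S_1\times\dots\times S_k$ summing to $0$. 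By the discrete Riesz rearrangement inequality (the combinatorial analogue of the rearrangement results of \cite[Chapter 10]{HLP} alluded to in the introduction), replacing each $S_i$ by the symmetric interval $S_i^\ast$ of the same cardinality --- the arithmetic progression of common difference $1$ centred at the origin --- does not decrease this count. Thus $T(S_1,\dots,S_k)\le T(S_1^\ast,\dots,S_k^\ast)$, and this step plays the role that the appeal to \cite[Lemma 2]{LevPinchasi} played in the three-variable case.

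Second, the sets $S_i^\ast$ are progressions centred at the origin with common difference $1$, so Proposition~\ref{prop:intervals_like_progressions} evaluates the right-hand side as
\[
T(S_1^\ast,\dots,S_k^\ast) = \Phi_k(s_1,\dots,s_k) + O_k\!\left(\Big(\textstyle\sum_i s_i\Big)^{k-2}\right).
\]
Hence the lemma reduces to the purely analytic inequality
\[
\Phi_k(s_1,\dots,s_k) \le \frac{\sigma_k}{k}\sum_i \hat{s_i}.
\]
As a sanity check, for $k=3$ this is exactly the inequality underlying Lemma~\ref{lem:uniform_bound_symmetric}: in the balanced regime $\Phi_3 = \tfrac14(2e_2 - p_2)$ with $e_2 = \sum_{i<j}s_is_j$ and $p_2 = \sum_i s_i^2$, and $\Phi_3\le \tfrac14 e_2 = \tfrac{\sigma_3}{3}e_2$ because $p_2\ge e_2$; in the regime $s_3\ge s_1+s_2$ one has $\Phi_3 = s_1s_2$ and $s_3(s_1+s_2)\ge(s_1+s_2)^2\ge 4s_1s_2$, which again gives $\Phi_3\le \tfrac14 e_2$. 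Equality holds throughout when $s_1=\dots=s_k$.

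To prove the inequality in general I would exploit that both sides are homogeneous of degree $k-1$ and symmetric. Writing $u_i = 1/t_i$ and using the probabilistic reading $\Phi_k(t_1,\dots,t_k) = \big(\prod_i t_i\big)\,g(0)$, where $g$ is the density at $0$ of a sum $U_1 + \dots + U_k$ of independent variables with $U_i$ uniform on an interval of length $t_i$, the claim becomes
\[
g(0) \le \frac{\sigma_k}{k}\sum_i u_i ,
\]
with equality when all $u_i$ are equal (there $g(0)=\sigma_k u$). I would deduce this from the fact that $g(0)$, viewed as a symmetric function $G(u_1,\dots,u_k)$, is concave on $(0,\infty)^k$: concavity gives the tangent-plane bound $G(u)\le G(\bar u\mathbf{1}) + \nabla G(\bar u\mathbf{1})\cdot(u-\bar u\mathbf{1})$ at the centroid $\bar u = \tfrac1k\sum_i u_i$, and symmetry forces $\nabla G(\bar u\mathbf{1}) = c\,\mathbf{1}$, so the correction term vanishes and $G(u)\le G(\bar u\mathbf{1}) = \sigma_k\bar u = \tfrac{\sigma_k}{k}\sum_i u_i$. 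Equivalently, one can verify the Schur--Ostrowski criterion for the symmetric ratio $\Phi_k(\mathbf{t})/\sum_l\prod_{m\neq l}t_m$, reducing the problem to equalising two of the lengths.

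The main obstacle is this concavity of the density $G$. It is complicated by the fact that $\Phi_k$ --- and hence $G$ --- is only piecewise smooth: the closed form (\ref{eqn:phi_closed_form_k_var}) changes across the hyperplanes $\{\eps\cdot\mathbf{t} = 0\}$. I would therefore establish concavity on each smooth region and then check that the function bends the correct way across the creases, guided by the model case $k=2$, where $\Phi_2(t_1,t_2)=\min(t_1,t_2)$ gives $G(u_1,u_2)=\min(u_1,u_2)$, which is manifestly concave. Finally I would carry the $O_k((\sum_i s_i)^{k-2})$ error from Proposition~\ref{prop:intervals_like_progressions} through unchanged; since it dominates the additive constant (the ``$+1$'' visible in Lemma~\ref{lem:uniform_bound_symmetric}), no sharpness is lost.
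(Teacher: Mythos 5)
Your reduction chain --- replace $S_i$ by $a_i\cdot S_i$, pass to symmetric intervals (the paper cites Lev \cite{lev_max} for exactly this rearrangement step), invoke Proposition \ref{prop:intervals_like_progressions}, and thereby reduce the lemma to the analytic inequality $\Phi_k(s_1,\dots,s_k)\le\frac{\sigma_k}{k}\sum_i\hat{s_i}$ --- is precisely the paper's, and your $k=3$ sanity check is correct. The gap is in the one step that carries all the difficulty: you assert, but do not prove, that $G(u_1,\dots,u_k)=u_1\cdots u_k\,\Phi_k(u_1^{-1},\dots,u_k^{-1})$ is concave on $(0,\infty)^k$, deferring it to a plan (``establish concavity on each smooth region and then check the creases'' of the $2^k$-term formula (\ref{eqn:phi_closed_form_k_var})). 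That is exactly the point where the paper has to work, and it does \emph{not} prove joint concavity in $k$ variables. Instead it proves only the two-variable statement (Claim \ref{clm:make_si_equal}) that, with $u_1+u_2$ and the remaining variables fixed, the maximum occurs at $u_1=u_2$; and it does so by writing the convolution of the remaining $k-2$ indicator functions as a \emph{positive} superposition $\int_0^\infty h(r)\,I_r\,dr$ of symmetric intervals, which reduces the whole problem to the explicitly checkable three-variable case, where concavity is verified from the closed form via the Hessian. Without some such reduction, a direct region-by-region analysis of (\ref{eqn:phi_closed_form_k_var}) together with crease-checking is a substantial unverified undertaking, not a routine verification.

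There is also a smaller logical issue in how you propose to use concavity even if you had it in the weaker pairwise form. Your tangent-plane argument at the centroid requires genuine joint concavity (or at least concavity along the segment from $u$ to $\bar u\mathbf{1}$); concavity along each direction $e_i-e_j$, which is what the pairwise claim delivers, does not imply concavity along arbitrary directions in the hyperplane $\sum_i v_i=\sum_i u_i$. The paper sidesteps this by using the pairwise claim only through an averaging iteration (repeatedly replace the largest and smallest arguments by their mean and conclude by continuity), which needs nothing beyond the two-variable midpoint statement. So to complete your argument you should either genuinely prove full concavity of $G$ for all $k$ (hard, and not done here), or switch to the majorization/averaging route and supply the interval-decomposition lemma that makes the two-variable case reduce to $k=3$.
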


\begin{remark*}
This lemma is actually weaker than Lemma \ref{lem:uniform_bound_symmetric}, where the error term was $O(1)$. The weaker error term here comes from our reduction to the real case using Proposition \ref{prop:intervals_like_progressions}; an inductive proof would likely give an $O(\sum s_i)^{k-3}$ error term. However, the $O(\sum s_i)^{k-2}$ error term is sufficient for our purpose.
\end{remark*}

\begin{remark*}
If $k$ is even, we can actually deduce a stronger version of (\ref{eqn:to_prove_about_convolution}) by using H\"older's inequality. We have 
\begin{align*}
\Phi_k(s_1, \dots, s_k) &= \frac{1}{2\pi} \int_{-\infty}^{\infty} \widehat{I_{s_1}}(r) \dots \widehat{I_{s_k}}(r) dr \\
&\leq \frac{1}{2\pi}\left( \prod_i \int_{-\infty}^{\infty} \widehat{I_{s_i}}(r) dr \right)^{1/k} \\
&= \prod_i \Phi_k(s_i, \dots, s_i)^{1/k} \\
&= \sigma_k (s_1 \dots s_k)^{1 - 1/k},
\end{align*}
where the second line used H\"older's inequality along with the fact that $k$ is even. This is stronger than (\ref{eqn:to_prove_about_convolution}) via an application of the AM-GM inequality. 

It is unclear whether the stronger version holds in the case that $k$ is odd; indeed, it is not too hard to establish for $k = 3$ by using (\ref{eqn:phi_closed_form_3_var}). However, this stronger form is not necessary, so we only prove the version we need.
\end{remark*}

\begin{proof}[Proof of Lemma \ref{lem:uniform_bound_symmetric_k_var}]

First, observe that the statement of the lemma is unchanged if we assume without loss of generality that each $a_i$ is 1, since we may replace $S_i$ with $a_i \cdot S_i$.

The first step in the proof is to apply \cite[Theorem 1]{lev_max}, which says that we may take each $S_i$ to be an interval of length $s_i$, roughly centred at the origin (depending on the parity of $s_i$), in order to maximise $T(S_1, \dots, S_k)$. We may immediately apply Proposition \ref{prop:intervals_like_progressions}, which says that
\[
T(S_1, \dots, S_k) = \Phi_k(s_1, \dots, s_k) + O((s_1 + \dots + s_k)^{k-2}).
\]
Thus, it suffices to prove that
\begin{equation}\label{eqn:to_prove_about_convolution}
\Phi(s_1, \dots, s_k) \leq \frac{\sigma_k}{k} \left( \sum_i \hat{s_i} \right).
\end{equation}
This will follow if we can prove that, for positive real numbers $t_1, \dots, t_k$,
\begin{equation}\label{eqn:to_prove_about_phi_flipped}
t_1\dots t_k \Phi(t^{-1}_1, \dots, t^{-1}_k) \leq \frac{\sigma_k}{k}(t_1 + \dots + t_k).
\end{equation}

To prove (\ref{eqn:to_prove_about_phi_flipped}), first observe that equality holds in the case that all of the $t_i$ are equal. Indeed, when $t_i = 1$ the relation follows from the definition of $\sigma$, and for other constant values of $t_i$ the equality follows by homogeneity.

Set $\Theta(t_1, \dots, t_k) = t_1 \dots t_k \Phi(t^{-1}_1, \dots, t^{-1}_k)$. To prove that $\Theta(t_1, \dots, t_k)$ achieves its maximum value (with $t_1 + \dots + t_k$ fixed) when all of the $t_i$ are equal, observe that it will suffice to prove the following claim.
\begin{claim}\label{clm:make_si_equal}
If $t_1 + t_2$ is fixed (as well as each of $t_3, \dots, t_k$), then $\Theta(t_1, \dots, t_k)$ achieves its maximum when $t_1 = t_2$. 
\end{claim}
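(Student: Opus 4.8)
The plan is to interpret $\Theta$ probabilistically and reduce the claim to a one-dimensional \emph{peakedness} statement. Writing $f_i$ for the density of the uniform distribution on $[-\tfrac{1}{2t_i}, \tfrac{1}{2t_i}]$, the identity $I_u = u f$ (for $f$ the uniform density on an interval of length $u$) together with Definition \ref{def:definition_of_phi} gives $\Theta(t_1, \dots, t_k) = (f_1 \ast \dots \ast f_k)(0)$. In other words, if $X_1, \dots, X_k$ are independent with $X_i$ uniform on $[-\tfrac{1}{2t_i}, \tfrac{1}{2t_i}]$, then $\Theta(t_1, \dots, t_k)$ is the value at $0$ of the density of $X_1 + \dots + X_k$. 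Since $t_3, \dots, t_k$ are held fixed, I would set $g$ to be the density of $X_3 + \dots + X_k$ and $h$ the density of $X_1 + X_2$, so that $\Theta = \int_{\R} h(w) g(w)\, dw$. Here $g$ is even (each $X_i$ is symmetric) and unimodal: it is a convolution of uniform, hence log-concave, densities, so it is log-concave by Pr\'ekopa--Leindler, and therefore non-increasing in $|w|$.

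The key reduction is a layer-cake decomposition. Because $g$ is even and non-increasing in $|w|$, its superlevel sets are symmetric intervals $\{w : g(w) > \lambda\} = (-r_\lambda, r_\lambda)$, and writing $g(w) = \int_0^\infty \1_{g(w) > \lambda}\, d\lambda$ and exchanging the order of integration yields
\[
\Theta = \int_0^\infty \left( \int_{-r_\lambda}^{r_\lambda} h(w)\, dw \right) d\lambda = \int_0^\infty \Pr\!\left(|X_1 + X_2| \le r_\lambda\right) d\lambda.
\]
Thus it suffices to prove that, for every fixed $r > 0$, the quantity $\Pr(|X_1 + X_2| \le r)$ is maximised, subject to $t_1 + t_2$ fixed, when $t_1 = t_2$; this is exactly the statement that $X_1 + X_2$ is most \emph{peaked} in the balanced case.

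To prove this peakedness statement I would argue via single crossing of densities. Write $p = \tfrac{1}{2t_1}$ and $q = \tfrac{1}{2t_2}$, so the constraint $t_1 + t_2 = c$ becomes $\tfrac1p + \tfrac1q = 2c$, and the balanced case is $p = q =: p_0$. The density $h$ is the symmetric trapezoid obtained by convolving the two uniforms, while in the balanced case it is the triangle $h_0$ supported on $[-2p_0, 2p_0]$. Two elementary inequalities control the comparison: by AM--HM we have $p + q \ge 2p_0$ (so the trapezoid has larger support than the triangle), and by AM--GM we have $pq \ge p_0^2$ (so on their common decreasing range the triangle $h_0$ falls off at least as fast). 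Using these, a short case analysis -- according to whether $|p - q|$ is at most or at least $2p_0$ -- shows that $h_0 - h$ is positive on an initial interval $(0, w^\ast)$ and non-positive on $(w^\ast, \infty)$; that is, the difference changes sign exactly once on $(0,\infty)$. Since $h_0$ and $h$ are both even densities, each integrates to $\tfrac12$ over $[0, \infty)$, so single crossing forces $\int_0^r (h_0 - h)\, dw \ge 0$ for every $r$, which is precisely $\Pr(|X_1+X_2| \le r)$ being largest in the balanced case. Feeding this back through the layer-cake identity proves the claim.

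The main obstacle is the peakedness comparison itself: one must check that the single-crossing property persists across the whole range of imbalances (hence the split into the two cases for the position of the plateau width $|p-q|$ relative to $2p_0$), and one must confirm at the outset that $g$ is genuinely symmetric and unimodal so that the layer-cake reduction is legitimate. Everything else is bookkeeping. I would also note that, as the even-$k$ computation in the preceding remark shows, for even $k$ one can bypass this argument entirely via H\"older; the single-crossing approach is what lets the odd case, and a uniform treatment of all $k$, go through.
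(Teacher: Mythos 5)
Your argument is correct, and its skeleton is the same as the paper's: both proofs fix $t_3,\dots,t_k$, absorb $I_{t_3^{-1}}\ast\dots\ast I_{t_k^{-1}}$ into a single symmetric unimodal bump, write that bump as a nonnegative superposition of indicators of symmetric intervals (your layer-cake decomposition is precisely the paper's representation $g(x)=\int_0^\infty h(r)I_r(x)\,dr$, which the paper constructs by induction on $k$ rather than by invoking log-concavity), and thereby reduce the claim to the statement that $t_1t_2(I_{t_1^{-1}}\ast I_{t_2^{-1}}\ast I_t)(0)$, i.e.\ $\Pr(|X_1+X_2|\le t/2)$, is maximised at $t_1=t_2$ --- which is exactly Claim \ref{clm:make_si_equal_against_indicator}. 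Where you genuinely diverge is in the proof of that final statement: the paper uses the closed form (\ref{eqn:formula_for_phi_3}) to show concavity of $\Theta_3$ (via a Hessian computation whose details it omits) and then averages the two orderings, whereas you compare the trapezoidal density of $X_1+X_2$ with the balanced triangular one directly, using $p+q\ge 2p_0$ (AM--HM) and $pq\ge p_0^2$ (AM--GM) to force a single sign change of the difference on $(0,\infty)$ and then concluding by the standard peakedness argument, since both halves integrate to $\tfrac12$. I checked that those two inequalities do give the single crossing in both of your cases, so the argument is complete modulo the routine case analysis. Your route is more elementary and self-contained (no Hessian, no explicit formula for the three-fold convolution); the paper's concavity route yields slightly more (genuine concavity, hence monotonicity under any averaging of the $t_i$, not just the extremal statement), but at the price of an omitted computation. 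Your observation that unimodality of $g$ follows from symmetry plus log-concavity of convolutions is also a cleaner justification of the interval decomposition than the paper's inductive construction.
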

To see that this claim is sufficient, observe that we may repeatedly replace the largest and smallest of the $t_i$ with their average. In doing so, $\max t_i - \min t_i$ will tend to 0, and we can use the continuity of $\Theta$ to obtain the result.

To prove Claim \ref{clm:make_si_equal}, recall the expression for $\Theta(t_1, \dots, t_k)$:
\begin{align*}
\Theta(t_1, \dots, t_k) &= t_1t_2 (t_3\dots t_k) (I_{t^{-1}_1} \ast I_{t^{-1}_2}) \ast (I_{t^{-1}_3} \ast \dots \ast I_{t^{-1}_k})(0) \\
&= (t_1 t_2 (I_{t^{-1}_1} \ast I_{t^{-1}_2}) \ast g)(0),
\end{align*}
where $g(x) = t_3 \dots t_k (I_{t^{-1}_3} \ast \dots \ast I_{t^{-1}_k})(x)$.

Now, observe that $g$ may be written as a combination of intervals, in the following sense:
\[
g(x) = \int_0^{\infty} h(r) I_r(x) dr,
\]
for some function $h : \R_{>0} \rightarrow \R_{>0}$ with bounded support. (The exception is when $k = 3$, in which case $g$ is just a single interval. But that will not affect the remainder of the proof of Claim \ref{clm:make_si_equal}.)

To see why this is the case, we may use induction. If $k = 4$, then suppose without loss of generality that $t_3 \leq t_4$. Then, we take $h(r) = t_3t_4$ if $\frac{t_3^{-1} - t_4^{-1}}{2} \leq r \leq \frac{t_3^{-1} + t_4^{-1}}{2}$, and 0 otherwise. For $k > 4$, it is easiest to apply the induction hypothesis to $I_{t^{-1}_3} \ast \dots \ast I_{t^{-1}_{k-1}}$, and then use a similar decomposition to the one we used for the $k=4$ case. We omit the details.

In view of this decomposition, proving Claim \ref{clm:make_si_equal} may be reduced to the following claim:
\begin{claim}\label{clm:make_si_equal_against_indicator}
Fix $t_1 + t_2$. Then, for any choice of $t$, we have that $t_1 t_2 (I_{t^{-1}_1} \ast I_{t^{-1}_2} \ast I_t)(0)$ is maximised when $t_1 = t_2$.
\end{claim}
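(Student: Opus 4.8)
The plan is to reinterpret the quantity probabilistically and then prove a peakedness statement. Write $c = t_1 + t_2$, which is held fixed, put $L_i = 1/t_i$, and for a pair $(t_1,t_2)$ set $p_{t_1,t_2} = t_1 t_2\,(I_{1/t_1} \ast I_{1/t_2})$. Since $\int I_{1/t_i} = 1/t_i$, the function $p_{t_1,t_2}$ is an even, unimodal, trapezoidal probability density of total mass $1$: it is the density of $U + V$, where $U,V$ are independent and uniform on intervals of lengths $L_1, L_2$ centred at the origin. Because $I_t$ is even, $t_1 t_2\,(I_{1/t_1} \ast I_{1/t_2} \ast I_t)(0) = \int_{-t/2}^{t/2} p_{t_1,t_2}(u)\,du$. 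Thus Claim \ref{clm:make_si_equal_against_indicator} asserts exactly that, among all admissible pairs, the symmetric choice $t_1 = t_2 = c/2$ puts the most mass on every window $[-t/2, t/2]$. Writing $p_0$ for the (triangular) symmetric density and $p_1$ for any asymmetric one, it therefore suffices to show $\int_{-t/2}^{t/2} p_0 \ge \int_{-t/2}^{t/2} p_1$ for all $t$.

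Next I would reduce this to a single-crossing property. Put $H(r) = \int_{-r}^{r} (p_0 - p_1)$ and $\phi = p_0 - p_1$. Since $p_0, p_1$ are even with equal mass, $H(0) = 0$, $H(r) \to 0$ as $r \to \infty$, and $H'(r) = 2\phi(r)$. Hence if $\phi$ is positive on an initial interval and negative thereafter (a single sign change, from $+$ to $-$), then $H$ increases and then decreases between its two vanishing endpoints, so $H \ge 0$ everywhere; taking $r = t/2$ gives the desired inequality.

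It then remains to verify the single crossing, which is the technical heart. Recording the piecewise-linear shapes: $p_0$ is supported on $[-2/c, 2/c]$ with peak $p_0(0) = c/2$ and slope $-c^2/4$ on the positive axis, while $p_1$ has peak $p_1(0) = \min(t_1,t_2) < c/2$, is flat and then has slope $-t_1 t_2$, and is supported on $[-(L_1+L_2)/2, (L_1+L_2)/2]$. Since $2/c$ is the harmonic mean of $L_1, L_2$, the AM--HM inequality gives $2/c \le (L_1+L_2)/2$, so the support of $p_0$ sits inside that of $p_1$; in particular, on $[0, 2/c)$ the slope $p_1'$ takes only the values $0$ and $-t_1 t_2$. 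The crucial input is AM--GM, $t_1 t_2 < c^2/4$ (strict in the asymmetric case), which forces $\phi'(r) = -c^2/4 - p_1'(r) \le -c^2/4 + t_1 t_2 < 0$ on all of $[0, 2/c)$. Hence $\phi$ strictly decreases there from $\phi(0) = c/2 - \min(t_1,t_2) > 0$ to $\phi(2/c^-) = -p_1(2/c) \le 0$, so it has a unique zero on this interval; and for $r \ge 2/c$ we have $p_0(r) = 0$, whence $\phi = -p_1 \le 0$. This yields exactly one sign change, completing the argument.

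The main obstacle is this last step: the naive approach runs into casework on the relative order of the breakpoints of $p_0$ and $p_1$ (where $p_1$ ceases to be flat versus where $p_0$ vanishes). The clean way around it, which I would emphasise, is to observe that $\phi$ is strictly monotone on the entire interval $[0, 2/c)$, a fact that follows at once from combining the two mean inequalities — AM--HM to place the support of $p_0$ inside that of $p_1$, and AM--GM to dominate the magnitude of $p_1$'s slope by that of $p_0$ — thereby sidestepping the casework entirely.
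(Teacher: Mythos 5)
Your argument is correct, and it takes a genuinely different route from the paper. The paper proves Claim \ref{clm:make_si_equal_against_indicator} by invoking the explicit piecewise formula (\ref{eqn:formula_for_phi_3}) for $(I_a \ast I_b \ast I_c)(0)$, showing that $\Theta(a,b,c)$ is concave in each regime (via the Hessian and Sylvester's rule, with details omitted), and then applying midpoint concavity in the first two arguments. You instead recognise $t_1t_2(I_{t_1^{-1}}\ast I_{t_2^{-1}})$ as the trapezoidal density of a sum of two independent centred uniforms of total ``inverse length'' $c=t_1+t_2$, so that the quantity in question is the mass this density places on the window $[-t/2,t/2]$, and you prove the symmetric (triangular) density dominates every asymmetric one on all such windows via a single-crossing argument: AM--HM places the support of $p_0$ inside that of $p_1$, AM--GM makes $p_0-p_1$ strictly decreasing on $[0,2/c)$, and the equal-mass condition then forces $H(r)=\int_{-r}^{r}(p_0-p_1)\geq 0$. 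I checked the quantitative details (peak heights $c/2$ versus $\min(t_1,t_2)$, slopes $-c^2/4$ versus $-t_1t_2$, supports) and they are right. Your route is more elementary and self-contained --- it needs neither the closed form (\ref{eqn:formula_for_phi_3}) nor a Hessian computation --- and it actually delivers a slightly stronger conclusion (domination in the peakedness order, uniformly in $t$), whereas the paper's concavity statement is a cleaner structural fact about $\Theta_3$ that slots directly into the averaging step of Claim \ref{clm:make_si_equal}. Either proof suffices for the application.
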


In fact, the easiest way to prove Claim \ref{clm:make_si_equal_against_indicator} is via the following explicit formula for $(I_a \ast I_b \ast I_c)(0)$:
\begin{equation}\label{eqn:formula_for_phi_3}
(I_a \ast I_b \ast I_c)(0) = \begin{cases}
ab & c\geq a+b\\
\dfrac{2(ab+bc+ca) - (a^2+b^2+c^2)}{4} & \text{ otherwise},
\end{cases}
\end{equation}
assuming that $c \geq a,b$ without loss of generality.

Given (\ref{eqn:formula_for_phi_3}), we can prove that $\Theta(a, b, c)$ is a concave function. If, for instance, $c^{-1} > a^{-1} + b^{-1}$, then $\Theta(a, b, c) = c$ which is clearly concave. When $a^{-1}, b^{-1} \nd c^{-1}$ satisfy the triangle inequality, then 
\[
\Theta(a, b, c) = \dfrac{2(a + b + c) - (abc^{-1} + ab^{-1}c + a^{-1}bc)}{4}.
\] We may prove that this is concave by computing the Hessian matrix and showing that it is nonpositive-definite everywhere; for instance, by using Sylvester's Rule. We omit the details.

In particular, $t \Theta(t_1, t_2, t^{-1}) = t_1 t_2 (I_{t^{-1}_1} \ast I_{t^{-1}_2} \ast I_t)(0)$ is concave as a function of $t_1 \nd t_2$. Therefore, 
\[
\frac{1}{2}\left(t \Theta(t_1, t_2, t^{-1}) + t \Theta(t_2, t_1, t^{-1})\right) \leq t \Theta\left(\frac{t_1+t_2}{2}, \frac{t_1+t_2}{2}, t^{-1}\right),
\]
which is exactly the statement of Claim \ref{clm:make_si_equal_against_indicator}. This completes the proof of Claim \ref{clm:make_si_equal}, and thus Lemma \ref{lem:uniform_bound_symmetric_k_var}.
\end{proof}

Armed with our more general Lemmas \ref{lem:easy_TNRG_lemma_k_var} and \ref{lem:uniform_bound_symmetric_k_var}, we may use an argument similar to the proof of Theorem \ref{thm:main_theorem} in section 3 in order to prove Theorem \ref{thm:main_theorem_k_var}.

\begin{proof}[Sketch proof of Theorem \ref{thm:main_theorem_k_var}]
Select $a_1, \dots, a_k$ to be coprime integers so that $a_1 = 1$, and $|a_i|$ is sufficiently large for $i \neq 1$. Let $S$ be a large set of integers.

With Lemma \ref{lem:easy_TNRG_lemma_k_var} replacing Lemma \ref{lem:easy_TNRG_lemma}, much of the argument is the same as the proof of Theorem \ref{thm:main_theorem}: 
\begin{itemize}
\item We start by using Theorem \ref{thm:green_sisask_structure} to split $S = S_1 \amalg \dots \amalg S_n \amalg S_0$, and show that $S_0$ can be ignored.
\item We can define the labelled digraph $G$ which captures almost all of the solutions to $a_1x_1 + \dots + a_kx_k = 0$.
\item We can prove that the product of the labels along a cycle must be 1, allowing us to define the function $d : [n] \rightarrow \Q$ as in (\ref{eqn:defn_of_depth}).
\item This allows us to show that an upper bound for the number of solutions coming from good $k$-tuples is 
\[
\sum_d T(R_{d/a_1}, \dots, R_{d/a_k}),
\]
where $R_{d}$ is the union of the $S_i$ with $d(i) = d$ (as in the case $k=3$, this sum can have no more than $n$ terms).
\end{itemize}

Lemma \ref{lem:uniform_bound_symmetric_k_var} allows us to bound this:
\begin{align}
\sum_d T(R_{d/a_1}, \dots, R_{d/a_k}) &\leq \sum_d \frac{\sigma_k}{k} \sum_i \widehat{R_{d/a_i}} + O(K_1 |S|^{k-2}) \nonumber\\
&\leq \frac{\sigma_k}{k} \sum_{(d_1, \dots, d_{k-1})} |R_{d_1}|\dots|R_{d_{k-1}}| + O(K_1 |S|^{k-2}).\label{eqn:sum_as_Rdi_k}
\end{align}
On the first line, $\widehat{R_{a/d_i}}$ denotes the product of the other $|R_{d/a_j}|$, and the error term comes from the fact that there are at most $n < K_1$ terms in the sum on the left hand side. On the second line, the sum is over unordered $k-1$-tuples $(d_1, \dots, d_{k-1})$ for which, for some ordering of the $a_i$, we have that $a_{i_1}d_1 = \dots = a_{i_{k-1}}d_{k-1}$; there can only be one such ordering by coprimality.

Now, for $i = 0, 1, \dots, k-1$, define the quantity $X_i$ by
\[
X_i = \sum_{\substack{d = a_2^{e_2} \dots a_k^{e_k} \\ e_2 + 2e_3 + \dots + (k-1)e_k \equiv d \mod k}} |R_d|.
\]
Note that if $d$ is such that $R_d$ is nonempty, then the representation of $d$ as a product $d = a_2^{e_2} \dots a_k^{e_k}$ exists due to how we constructed the labels, and is unique due to the coprimality of the $a_i$.

Now, suppose that $R_{d_1}$ and $R_{d_2}$ appear together in at least one term on the right hand side of (\ref{eqn:sum_as_Rdi_k}). Then, $d_1/d_2 = a_i/a_j$ for some $i \neq j$, and so $R_{d_1} \nd R_{d_2}$ contribute to different $X_i$.

Thus, we may upper bound the sum in the right hand side of (\ref{eqn:sum_as_Rdi_k}):
\begin{align*}
\sum_{(d_1, \dots, d_{k-1})} |R_{d_1}|\dots|R_{d_{k-1}}| \leq \sum_i \widehat{X_i},
\end{align*}
where $\widehat{X_i} = \prod_{j \neq i}X_j$. This bound follows from the fact that each unordered $k-1$-tuple $|R_{d_1}|\dots |R_{d_{k-1}}|$ on the left hand side contributes to exactly one of the terms on the right hand side.

Finally, observe that 
\[
\sum_i \widehat{X_i} \leq \frac{1}{k^{k-2}} \left(\sum_i X_i\right)^{k-1}.
\]
To see why, observe that if $X_i + X_j$ is kept fixed, moving $X_i$ and $X_j$ closer together increases the value of the left hand side without changing the right hand side. Thus the left hand side is maximised when the $X_i$ are all the same, at which point equality occurs.

Putting all of this together, we learn that 
\[
\sum_d T(R_{d/a_1}, \dots, R_{d/a_k}) \leq \frac{\sigma_k}{k^{k-1}} |S|^{k-1} + O(|S|^{k-2}),
\]
which gives the bound in the statement of Theorem \ref{thm:main_theorem_k_var} when $|S|$ is large enough.
\end{proof}

\section{Systems of more than one equation}\label{sec:generalisation_to_systems}

Another way in which one might wish to extend Theorem \ref{thm:main_theorem} is to ask if a similar result holds for systems of $m$ equations in $k$ variables. One might imagine that a result of the following form ought to hold.

\begin{question*}
Suppose that $k \geq m+2$ and $m \geq 1.$ Does there exist an explicit positive constant $\sigma_{m, k}$ with the following properties:

\begin{itemize}
\item For any system $\mathcal{A}$ of $m$ equations in $k$ variables, there are be large sets $S$, for which there are at least $(\sigma_{m, k} - o(1)) |S|^{k-m}$ $k$-tuples in $S$ satisfying $\mathcal{A}$.

\item For any $\eps > 0$, there are systems such that the number of $k$-tuples satisfying $\mathcal{A}$ in \emph{any} large $S \subseteq \Z$ is no more than $(\sigma_{m, k} + \eps)|S|^{k - m}$.
\end{itemize}
\end{question*}

Thus, Theorems \ref{thm:main_theorem} and \ref{thm:main_theorem_k_var} tell us that $\sigma_{m, k}$ exists whenever $m = 1$, and that $\sigma_{1, k} = \sigma_k$. However, it turns out that when $m > 1$, not even the first of these has a positive answer, in the following sense.
\begin{theorem}\label{thm:no_system_thm}
Let $\eps > 0$. Then, there exists a non-degenerate system of two equations in four variables with the property that for any large enough $S$, there are no more than $\eps |S|^2$ solutions to the system in $S$.
\end{theorem}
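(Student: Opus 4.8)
The plan is to exhibit an explicit system and then run the machinery of Section~\ref{sec:proof_of_thm} on both of its equations at once. Fix a large integer $M$ (depending on $\eps$ in the manner of~\eqref{eqn:condition_on_M}) and take the system
\begin{equation*}
M x_1 + (M+1) x_2 - x_3 = 0, \qquad (M+1) x_1 + M x_2 - x_4 = 0.
\end{equation*}
The coefficient matrix has rank $2$, every variable occurs, and a short computation shows that every nonzero integer combination $\alpha L_1 + \beta L_2$ of the two forms still involves at least three variables (the $x_3$- and $x_4$-coefficients are $-\alpha$ and $-\beta$, and the two $x_1,x_2$-coefficients cannot both vanish unless $\alpha=\beta=0$), so the system is non-degenerate. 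A solution is determined by $(x_1,x_2)$, whence the trivial bound $|S|^2$; the task is to gain the factor $\eps$. The essential feature is that $L_2$ is obtained from $L_1$ by \emph{interchanging} the coefficients of $x_1$ and $x_2$, and this reciprocity is precisely what forces the two equations to be extremised by incompatible configurations.

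First I would reduce to structured pieces exactly as in Section~\ref{sec:proof_of_thm}. Each of $L_1=0$ and $L_2=0$ is a good three-variable equation (coprime coefficients, two of which are large), so Theorem~\ref{thm:green_sisask_structure}, applied to $S$ with $\eps_1$ a suitable power of $\eps$, yields $S = S_1 \amalg \dots \amalg S_n \amalg S_0$. Since any solution of the system is in particular a solution of $L_1=0$ (involving $x_1,x_2,x_3$) and of $L_2=0$ (involving $x_1,x_2,x_4$), Lemma~\ref{lem:ignore_S0} applied to $L_1$ and to $L_2$ shows that the solutions with some coordinate in $S_0$ number at most $\tfrac{\eps}{2}|S|^2$. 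I then build a single labelled digraph $G$ on the vertices $S_1,\dots,S_n$, drawing both the edges coming from heavy pairs for $L_1=0$ and those coming from heavy pairs for $L_2=0$; every label lies in the multiplicative group generated by $M$ and $M+1$.

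The key steps are the cycle lemma and the alignment contradiction. Because every label has the form $M^{e_1}(M+1)^{e_2}$, the proof of Lemma~\ref{lem:no_bad_cycles} goes through verbatim for the combined graph: a cycle whose label product is not $1$ produces, via Lemmas~\ref{lem:easy_TNRG_lemma}, \ref{lem:gs_doubling_lem} and~\ref{lem:ruzsa_triangle}, a violation of Bukh's estimate (Theorem~\ref{thm:bukh_sod_thm}). Hence there is a well-defined depth function $d:[n]\to\Q$ with $d(j)=t\,d(i)$ along every edge labelled $t$. Now comes the crucial point: a good triple for $L_1=0$ with $x_1\in S_i$ and $x_2\in S_j$ forces $d(i)/d(j)=(M+1)/M$, whereas a good triple for $L_2=0$ with the \emph{same} roles $x_1\in S_i$, $x_2\in S_j$ forces $d(i)/d(j)=M/(M+1)$, precisely because the coefficients of $x_1$ and $x_2$ were swapped. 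Since $(M+1)/M \neq M/(M+1)$, no ordered pair $(S_i,S_j)$ can serve as the first two coordinates of a good triple for both equations simultaneously.

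To finish, note that any solution of the system with no coordinate in $S_0$ is at once a solution of $L_1=0$ lying in some triple $(S_i,S_j,S_k)$ and a solution of $L_2=0$ lying in $(S_i,S_j,S_l)$. Applying the bad-triple estimate (Lemma~\ref{lem:ignore_bad_triples}) to $L_1$ and then to $L_2$ discards all but $\tfrac{\eps}{2}|S|^2$ of these solutions unless \emph{both} triples are good — and the previous paragraph shows there are no such solutions. Rescaling $\eps$ then gives $T_{\mathcal A}(S)\le\eps|S|^2$. I expect the main obstacle to be the bookkeeping of the two edge-sets inside one digraph: one must choose the heaviness thresholds uniformly so that Lemma~\ref{lem:no_bad_cycles} and the bad-triple estimate apply to each equation separately while the alignment contradiction is read off from the single shared depth function. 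The reciprocity $L_1\leftrightarrow L_2$ does all the real work, and the delicate part is verifying that it genuinely obstructs simultaneous extremality — not merely at the leading-order heuristic level (where, for the set of Proposition~\ref{prop:twelfth}, the two equations want $x_1$ and $x_2$ in opposite pieces) but rigorously through the digraph formalism.
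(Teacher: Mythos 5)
Your argument is essentially sound, but it is a genuinely different proof from the one in the paper. The paper takes the system $x+y=z$, $x+My=w$ and gives a short, self-contained argument that bypasses all of the Section \ref{sec:proof_of_thm} machinery: supposing $\eps|S|^2$ solutions, it forms the bipartite graph on two copies of $S$ joining $a$ to $b$ when $a+b$ and $a+Mb$ both lie in $S$, applies the Balog--Szemer\'edi--Gowers path-counting lemma (\cite[Corollary 6.20]{TaoVu}) to extract large subsets $A', B'$ with $|A'+B'| \ll_\eps |S|$ and $|A'+M\cdot B'| \ll_\eps |S|$, and then a single application of the Ruzsa triangle inequality yields $|B'-M\cdot B'| \ll_\eps |B'|$, contradicting Theorem \ref{thm:bukh_sod_thm} for $M$ large. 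Note that the paper's first equation is $x+y=z$, the extremal case $\gamma_{1,1,-1}=3/4$: the scarcity comes entirely from the interaction of the two equations, isolated as one dilate inequality. You instead choose a ``reciprocal'' pair of individually good equations and re-run the structure theorem, the combined labelled digraph, and the cycle lemma; your decisive observation --- that a pair $(S_i,S_j)$ which is heavy for both equations forces two edges $S_i\to S_j$ whose labels differ by a factor $\bigl(\tfrac{M+1}{M}\bigr)^{2}$, creating a cycle with label product $M^2/(M+1)^2\neq 1$ that the cycle lemma forbids --- is correct and does carry the proof. Two small repairs: first, with coefficients $(M,M+1,-1)$ the labels generate the group $\langle -1, M, M+1\rangle$, not $\langle M, M+1\rangle$, and a cycle of product $-1$ is \emph{not} excluded by Bukh's theorem (take $S_1$ symmetric about $0$), so your depth function $d$ need not exist as stated; but you never actually need $d$, since the incompatibility is already a forbidden $2$-cycle with $|r|+|s| = M^2+(M+1)^2$, which is large enough for Bukh irrespective of signs. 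Second, the heaviness thresholds must be fixed once for the twelve edge-types of the combined graph, which only perturbs constants. Your route shows the digraph formalism extends to systems of individually good equations; the paper's is far shorter and reveals that no structural decomposition is needed at all.
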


\begin{remark*}
It is easy to see that Theorem \ref{thm:no_system_thm} implies the analogous result for any choice of $k, m$ with $k \geq m+2$ and $m > 1$.
\end{remark*}

The goal of this section is to prove Theorem \ref{thm:no_system_thm}.
\begin{proof}
We will prove Theorem \ref{thm:no_system_thm} for the following system:
\begin{nalign}\label{eqn:the_system}
x + y &= z \\
x + My &= w,
\end{nalign}
where $M$ is a sufficiently large constant (in terms of $\eps$) to be chosen later.

We will start by borrowing the following lemma, which appears as part of the proof of the Balog-Szemer\'edi-Gowers theorem.

\begin{lemma}[\protect{\cite[Corollary 6.20]{TaoVu}}]\label{lem:BSGLemma}
Let $G$ be a bipartite graph with vertex sets $A$ and $B$ and edge set $E \subseteq A \times B$. Suppose that $|E| \geq \eps |A||B|$, for some $\eps > 0$. Then we can find subsets $A' \subseteq A$ and $B' \subseteq B$, with $|A'| \gg_\eps |A|$ and $|B'| \gg_\eps |B|,$ such that, whenever $a \in A'$ and $b \in B'$, there are $ \gg_\eps |A||B|$ paths of length three from $a$ to $b$ in $G$.
\end{lemma}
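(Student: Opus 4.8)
The plan is to prove Lemma~\ref{lem:BSGLemma} by the classical graph-theoretic argument underlying the Balog--Szemer\'edi--Gowers theorem: reformulate the desired path count in terms of codegrees, establish the needed bounds on average by Cauchy--Schwarz, and then upgrade the average bound to one that is uniform over all pairs by a pigeonholing (dependent random choice) argument. First I would rewrite the conclusion. For $a, a' \in A$ write $c(a, a') = |N(a) \cap N(a')|$ for their codegree, where $N(\cdot)$ denotes the neighbourhood. A path of length three from $a \in A$ to $b \in B$ has the shape $a - b_1 - a_1 - b$ with $b_1 \in B$ and $a_1 \in A$; summing over the middle vertex $a_1$ shows that the number of such paths equals $\sum_{a_1 \in N(b)} c(a, a_1)$. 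Thus it suffices to find $A' \subseteq A$ and $B' \subseteq B$, of sizes $\gg_\eps |A|$ and $\gg_\eps |B|$, for which $\sum_{a_1 \in N(b)} c(a, a_1) \gg_\eps |A||B|$ holds for \emph{every} $a \in A'$ and $b \in B'$.

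Next I would extract, by counting, a large set of vertices on the $A$ side with many high-codegree partners. A single application of Cauchy--Schwarz gives
\[
\sum_{a, a' \in A} c(a, a') = \sum_{b \in B} \deg(b)^2 \geq \frac{|E|^2}{|B|} \geq \eps^2 |A|^2 |B|,
\]
so, calling a pair $(a, a')$ \emph{rich} when $c(a, a') \geq \tfrac{\eps^2}{2}|B|$, the poor pairs contribute less than $\tfrac{\eps^2}{2}|A|^2|B|$ and hence there are at least $\tfrac{\eps^2}{2}|A|^2$ rich pairs. A Markov-type averaging then produces a set $A_1 \subseteq A$ with $|A_1| \gg_\eps |A|$ such that every $a \in A_1$ has at least $\tfrac{\eps^2}{4}|A|$ rich partners. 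For such $a$, restricting the sum defining the path count to rich partners $a_1$ lying in $N(b)$ yields $\sum_{a_1 \in N(b)} c(a, a_1) \geq \tfrac{\eps^2}{2}|B| \cdot |R(a) \cap N(b)|$, where $R(a)$ is the (large) set of rich partners of $a$.

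It remains to choose $B'$ and to shrink $A_1$ to a set $A'$ so that $|R(a) \cap N(b)| \gg_\eps |A|$ holds simultaneously for all $a \in A'$ and $b \in B'$; this uniformity is the main obstacle. It cannot be arranged by the naive averaging above, since (for instance) a disjoint union of complete bipartite graphs shows that one cannot force every pair of a large vertex set to have large codegree: the sets $A'$ and $B'$ must be located together inside a single densely connected piece. The standard way to accomplish this is a dependent random choice: one selects the pivots defining $B'$ (and the threshold cutting $A'$ out of $A_1$) at random, so that pairs failing the required intersection bound are unlikely to survive, and then deletes the few offending vertices. The delicate points, which I expect to require the most care, are to choose the number of random pivots and the codegree and intersection thresholds so that the expected number of surviving ``bad'' pairs is dominated by the expected sizes of the retained sets, and to verify that the successive prunings needed to make the estimate hold for every pair of $A' \times B'$ cost only factors depending on $\eps$, leaving $|A'| \gg_\eps |A|$ and $|B'| \gg_\eps |B|$.
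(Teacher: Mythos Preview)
The paper does not prove this lemma at all: it is simply quoted as \cite[Corollary 6.20]{TaoVu} and then applied. So there is no ``paper's own proof'' to compare against; the author treats it as a black box from the Balog--Szemer\'edi--Gowers machinery.

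Your outline is the standard argument behind that corollary (codegree reformulation, Cauchy--Schwarz to get many rich pairs, then a dependent random choice to upgrade average bounds to uniform ones), and the first two steps are carried out correctly. The final step, however, is left as a description of what needs to happen rather than an argument: you have not specified the pivot selection, the thresholds, or the deletion step, and you explicitly flag these as the ``delicate points.'' That is where essentially all of the work lies, so as written this is a correct plan but not yet a proof. If you want to complete it, the cleanest route is to pick a uniformly random $b^* \in B$, set $A^* = N(b^*)$, and show that in expectation $A^*$ is large while few pairs $(a,a') \in A^* \times A^*$ have small codegree; one then deletes a vertex from each bad pair. This yields an $A'$ in which \emph{all} pairs are rich, after which $B'$ can be taken to be the vertices of $B$ with many neighbours in $A'$, and the path count follows directly.
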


Let $S$ be a sufficiently large set (in terms of $M \nd \eps$), and suppose that there are more than $\eps |S|^2$ solutions to (\ref{eqn:the_system}) in $S$. Consider the bipartite graph on vertex set $A \amalg B$, where $A = B = S$; that is, both parts of $G$ are $S$. Draw an edge from $a$ to $b$ if and only if there is a solution to (\ref{eqn:the_system}) with $x = a \nd y = b$; in other words, if $a + b \nd a + Mb$ are both in $S$. In particular, $G$ has at least $\eps |S|^2$ edges.

We may immediately apply Lemma \ref{lem:BSGLemma} to $G$. This gives us sets $A' \subseteq A \nd B' \subseteq B$ such that, for any $a \in A' \nd b \in B'$, there are $\gg_\eps |S|^2$ paths of length 3 in $G$ from $a$ to $b$.

\begin{claim*}
These sets $A'$ and $B'$ satisfy $|A' + B'| \ll_\eps |S|$ and $|A' + M \cdot B'| \ll_\eps |S|$.
\end{claim*}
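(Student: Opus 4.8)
The plan is to exploit the fact that, for any fixed $a \in A'$ and $b \in B'$, there are $\gg_\eps |S|^2$ paths of length three from $a$ to $b$ in $G$. A path of length three from $a$ to $b$ traverses vertices $a, b', a', b$, where $b' \in B$, $a' \in A$, and consecutive vertices are joined by edges. Unpacking the edge relation, the edge $a \sim b'$ means $a + b'$ and $a + Mb'$ lie in $S$; the edge $a' \sim b'$ means $a' + b'$ and $a' + Mb'$ lie in $S$; and the edge $a' \sim b$ means $a' + b$ and $a' + Mb$ lie in $S$. The key observation is that $a + b = (a + b') - (a' + b') + (a' + b)$, so that $a + b$ is expressed as a signed sum of three elements of $S$. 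The first step is therefore to define, for each intermediate pair $(b', a')$ forming a path from $a$ to $b$, the triple $\bigl((a + b'),\, (a' + b'),\, (a' + b)\bigr) \in S^3$, and to argue that the map from paths to such triples is injective: knowing the three coordinates $a + b'$, $a' + b'$ and $a' + b$ recovers $b' - a'$ from the first two, hence (since $a + b$ is fixed) recovers $a' + b$, and so on, pinning down $a'$ and $b'$ uniquely.

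With injectivity in hand, the second step is a counting argument in the spirit of Lemma \ref{lem:easy_TNRG_lemma}. Since every $a \in A'$, $b \in B'$ admits $\gg_\eps |S|^2$ such paths, and each path produces a distinct triple $(s_1, s_2, s_3) \in S^3$ with $s_1 - s_2 + s_3 = a + b$, we conclude that the number of representations of $a+b$ as $s_1 - s_2 + s_3$ with $s_i \in S$ is $\gg_\eps |S|^2$. Summing over all $a \in A'$, $b \in B'$ — and noting that each value $a+b$ can arise from at most $|A'+B'|$ distinct sums while the total representation count is trivially at most $|S|^3$ — gives
\[
|A'||B'| \cdot |S|^2 \ll_\eps \#\{(a,b,s_1,s_2,s_3)\} \leq |S|^3,
\]
provided we are careful to count each element of $A' + B'$ with its full multiplicity; more precisely, the total number of quintuples is at least $\gg_\eps |A'||B'||S|^2$ and at most $|A'+B'|\cdot|S|^2$ (each element of $A'+B'$ has at most $|S|^2$ representations as $s_1-s_2+s_3$), whence $|A'+B'| \gg_\eps |A'||B'| \gg_\eps |S|^2$. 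This is the wrong direction, so instead I would run the comparison as: the number of quintuples is $\gg_\eps |A'||B'||S|^2 \gg_\eps |S|^4$, and is also at most $|A'+B'|\cdot|S|^2$, giving $|A'+B'| \gg_\eps |S|^2$, which again overshoots. The correct accounting, which I expect to be the delicate point, is that the representation function of $A'+B'$ as $s_1-s_2+s_3$ is bounded pointwise by $E(S,S)^{1/2}\cdot|S|^{1/2}$-type quantities; the clean route is to observe that $\sum_{c \in A'+B'} r(c) \geq \gg_\eps |A'||B'||S|^2$ where $r(c) \leq |S|^2$ for each $c$, forcing $|A'+B'| \geq \gg_\eps |A'||B'|$, and then feed in $|A'|,|B'| \gg_\eps |S|$ together with the a priori bound $|A'+B'| \leq |S|^2$ to extract $|A'+B'| \ll_\eps |S|$ only after inverting the inequality correctly.

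The honest version of the argument, and the one I would commit to, replaces the above with a direct Cauchy--Schwarz bound: the injectivity shows $|A'+B'| \cdot \max_c r(c) \geq \sum_c r(c) \geq \gg_\eps |A'||B'||S|^2$, and separately each $c \in A'+B'$ satisfies $r(c) \leq |S|^2$ trivially, which alone is too weak. The resolution is that the $\gg_\eps|S|^2$ paths for a \emph{single} pair $(a,b)$ already show $r(a+b) \gg_\eps |S|^2$ for \emph{every} $(a,b) \in A' \times B'$, and a standard doubling estimate (Lemma \ref{lem:gs_doubling_lem} applied after a Cauchy--Schwarz passage, or directly) converts a uniformly large representation function over a product set into the bound $|A'+B'| \ll_\eps |S|$. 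Concretely, since $a+b$ has $\gg_\eps|S|^2$ representations for each of the $\gg_\eps|S|^2$ pairs, and all such sums $a+b$ together use up at most $|S|^3$ total representations, the number of \emph{distinct} values $a+b$ is $\ll_\eps |S|^3/|S|^2 = |S|$, giving $|A'+B'| \ll_\eps |S|$. The identical argument applied to the coordinates $a + Mb = (a+Mb') - (a'+Mb') + (a'+Mb)$, each a signed sum of elements of $S$, yields $|A'+M\cdot B'| \ll_\eps |S|$, completing the proof of the Claim. The main obstacle is getting the multiplicity bookkeeping exactly right so that the divisor $|S|^2$ is legitimately cancelled; everything else is routine.
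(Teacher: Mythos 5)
Your final committed argument --- each $c \in A'+B'$ has $\gg_\eps |S|^2$ representations as $s_1 - s_2 + s_3$ with $s_i \in S$ (via the injective path-to-triple map), while the total number of such triples is at most $|S|^3$, forcing $|A'+B'| \ll_\eps |S|$ --- is exactly the paper's proof and is correct, for $A'+M\cdot B'$ as well. The wrong-direction false starts preceding it should simply be deleted, and note one small slip: the first two coordinates of the triple recover $a-a'$, not $b'-a'$, though injectivity is immediate anyway since $b'=(a+b')-a$ and $a'=(a'+b)-b$ with $a,b$ fixed.
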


\begin{proof}[Proof of Claim]
To prove this claim, we can use an argument similar to that used in the proof of the Balog-Szemer\'edi-Gowers theorem. Showing that $|A' + B'| \ll_\eps |S|$ and $|A' + M \cdot B'| \ll_\eps |S|$ are similar, so we will only do the former.

Let $X$ denote the set of triples $(x, y, z)$ of elements of $(A + B) \cap S$, for which $x - y + z \in A' + B'$. We may trivially upper bound $|X|$; indeed, $|(A + B) \cap S| \leq |S|$, so $|X| \leq |S|^3$.

For a lower bound on $|X|$, consider an element $a + b$ of $A' + B'$. By definition, there are $\gg_\eps |S|^2$ paths of length 3 from $a$ to $b$ in $G$. Each such path may be written $a \sim b', a' \sim b', a' \sim b$ for some $a' \in A, b' \in B$. In other words, $a + b', a' + b' \nd a + b'$ are all in $S$.

Now, $(a+b') - (a'+b') + (a'+b) = (a+b),$ so we have located a triple $x, y, z \in (A+B)\cap S$ with $x - y + z = a+b$. These triples will be different for different paths, and so there must be $\gg_\eps |S|^2$ such triples.

There are $|A' + B'|$ elements of $A' + B'$, each of which gives $\gg_\eps |S|^2$ triples $x, y, z$. Thus, we have that $|A' + B'| |S|^2 \ll_\eps |S|^3$, and thus $|A' + B'| \ll_\eps|S|$, as required.\end{proof}

Let us now see how we may use this claim to complete the proof of Theorem \ref{thm:no_system_thm}. Lemma \ref{lem:ruzsa_triangle} immediately tells us that $|B' - M \cdot B'| \ll_\eps |S|$, and thus that $|B' - M \cdot B'| \ll_\eps |B'|$. This contradicts Theorem \ref{thm:bukh_sod_thm}, provided that $M$ is sufficiently large.

\end{proof}

\addtocontents{toc}{\protect\vspace*{\baselineskip}}
\nocite{HLP}


\addtocontents{toc}{\protect\vspace*{\baselineskip}}

\addcontentsline{toc}{section}{References}

\bibliographystyle{plainnat}

\end{document}